\newcommand{\norma}[1]{{\left\vert\kern-0.25ex\left\vert\kern-0.25ex\left\vert #1
    \right\vert\kern-0.25ex\right\vert\kern-0.25ex\right\vert}}
\newcommand{\sech}{\text{sech}}
\newcommand{\by}{\mathbf{y}}
\newcommand{\I}{\mathrm{i}}
\renewcommand{\Im}{\mathrm{Im}}
\renewcommand{\Re}{\mathrm{Re}}
\newcommand{\bb}{\mathbf{b}}
\newcommand{\bc}{\mathbf{c}}
\newcommand{\bo}{\mathbf{0}}
\newcommand{\bg}{\mathbf{g}}
\newcommand{\be}{\mathbf{e}}
\newcommand{\br}{\mathbf{r}}
\newcommand{\bs}{\mathbf{s}}
\newcommand{\bx}{\mathbf{x}}
\newcommand{\bF}{\mathbf{F}}
\newcommand{\bu}{\mathbf{u}}
\newcommand{\bz}{\mathbf{z}}
\newcommand{\bv}{\mathbf{v}}
\newcommand{\bA}{\mathbf{A}}
\newcommand{\bB}{\mathbf{B}}
\newcommand{\bD}{\mathbf{D}}
\newcommand{\bDF}{\mathbf{DF}}
\newcommand{\bG}{\mathbf{G}}
\newcommand{\bR}{\mathbf{R}}
\newcommand{\bI}{\mathbf{I}}
\newcommand{\bJ}{\mathbf{J}}
\newcommand{\bX}{\mathbf{X}}
\newcommand{\bY}{\mathbf{Y}}
\newcommand{\tbn}[1]{{\left\vert\kern-0.25ex\left\vert\kern-0.25ex\left\vert #1 \right\vert\kern-0.25ex\right\vert\kern-0.25ex\right\vert}}
\newtheorem{lemma}{Lemma}[section]
\newtheorem{theorem}{Theorem}[section]
\title{The complex-step Newton method and its convergence}
\author{Dimitrios Mitsotakis}
\address{\textbf{D.~Mitsotakis:} Victoria University of Wellington, School of Mathematics and Statistics, PO Box 600, Wellington 6140, New Zealand}
\email{dimitrios.mitsotakis@vuw.ac.nz}
\date{\today}
\begin{document}

\keywords{Newton-Krylov methods, complex-step approximation, convergence}
\subjclass{49M15,65H10,65P10}

\begin{abstract}
Considered herein is a modified Newton method for the numerical solution of nonlinear equations where the Jacobian is approximated using a complex-step derivative approximation. We show that this method converges for sufficiently small complex-step values, which need not be infinitesimal. Notably, when the individual derivatives in the Jacobian matrix are approximated using the complex-step method, the convergence is linear and becomes quadratic as the complex-step approaches zero. However, when the Jacobian matrix is approximated by the nonlinear complex-step derivative approximation, the convergence rate remains quadratic for any appropriately small complex-step value, not just in the limit as it approaches zero. This claim is supported by numerical experiments. Additionally, we demonstrate the method's robust applicability in solving nonlinear systems arising from differential equations, where it is implemented as a Jacobian-free Newton-Krylov method.
\end{abstract}

\maketitle

\section{Introduction}

The subject of this paper is the study of a modified Newton method known as complex-step Newton method for approximating roots of smooth functions. The complex-step Newton method when applied to systems of equations does not require the knowledge of the Jacobian matrix and therefore falls into the category of the Jacobian-free Newton methods. Jacobian-free Newton methods are significant for two primary reasons: (i) they operate effectively even when the Jacobian matrix (or the derivative in scalar cases) is unavailable, and (ii) they can be integrated with Krylov solvers to enhance their efficiency in solving systems of equations \cite{BH1986,BS1990,KK2004}. One example of such a method is the complex-step Newton method as it is demonstrated by \cite{KSPC22}. 
In what follows $h$ is assumed to be a fixed positive constant. Assume that we search for the solution $x^\ast$ of an equation $f(x)=0$ where $f:I\subset\mathbb{R}\to\mathbb{R}$ is appropriately smooth function and $I$ an interval that includes in its interior the root $x^\ast$. Then, the first derivative, $f'(x)$, can be approximated as $f'(x)\approx\Im f(x+\I h)/h$ \cite{LM1967,ST1998}. The parameter $h$ is referred to as the complex step, and the method itself is named after this parameter. Replacing the classical derivative by its complex-step approximation, the complex-step Newton iteration can be expressed in the following form:
$$x_{k+1}=x_k-h\frac{f(x_k)}{\Im f(x_k+\I h)} \quad \text{for $k=0,1,\dots$}\ ,$$
where $\I=\sqrt{-1}$ denotes the imaginary unit. 

Similarly, the complex-step Newton iteration can be applied to systems of equations of the form $\bF(\bx)=\bo$, where $\bx\in\mathbb{R}^n$ and $\bF:D\subset\mathbb{R}^n\to\mathbb{R}^n$ for $n\geq 2$. Specifically, the Jacobian matrix $\bDF(\bx)$ of $\bF$ can be approximated by the matrix with entries
$$[\bJ_h(\bx)]_{ij}=\frac{1}{h}\Im F_i(\bx+\I h\be_j)\quad \text{for any $h>0$}\ ,$$
where $\be_j$ denotes the usual basis of $\mathbb{R}^n$. The complex-step Newton method for the system $\bF(\bx)=\bo$ is the iteration
\begin{equation}\label{eq:intro1}
\bx^{(k+1)}=\bx^{(k)}-\bJ_h^{-1}(\bx^{(k)})\bF(\bx^{(k)})\quad \text{for k=0,1,2,\dots\ .}
\end{equation}
This formulation of the complex-step Newton method utilizes the approximate Jacobian matrix $\bJ_h$, and it is referred as the complex-step Jacobian Newton method.

Alternatively, as we shall see, we can approximate the matrix-vector product between the Jacobian matrix $\bDF(\bx)$ and a vector $\bv$ by the nonlinear function $\frac{1}{h}\Im \bF(\bx+\I h \bv)$ resulting to a Jacobian-free Newton method that can be expressed in the form of a fixed point method $\bx^{(k+1)}=\bx^{(k)}-\bu^{(k)}$ for $k=0,1,\dots$, where $\bu^{(k)}$ is the solution of the system
\begin{equation}
    \frac{1}{h}\Im \bF(\bx^{(k)}+\I h \bu^{(k)}) = \bF(\bx^{(k)})\ .
\end{equation}
The last equation viewed as an approximation of the equation $\bDF(\bx^{(k)})\bu^{(k)}=\bF(\bx^{(k)})$ can be solved efficiently using Krylov methods such as the Generalized Minimal Residual (GMRES) method without using the Jacobian matrix $\bDF(\bx^{(k)})$, \cite{BS1990}. Although these two variants of the complex-step Newton method appear very similar, they have different convergence properties.

The question of which values of $h$ lead to convergence, and at what rate, is critical for the complex-step Newton method. Addressing this issue can facilitate the method's use in lower-precision arithmetic \cite{Kelley2022} and increase confidence in its application.

Classical Newton's method is well-known for its quadratic rate of convergence, for which both local and semi-local convergence analyses have been established. In this work, we focus on local convergence analysis, where the convergence of the numerical method is studied under assumptions about the solution $\bx^\ast$ of the equation $\bF(\bx)=\bo$. Nevertheless, semi-local analysis, based on hypotheses related to the initial guess, can yield useful conclusions especially to error estimates and also provide insights of iterative methods when the processes is terminated in a finite number of iterations. For further details, refer to \cite{A2007,A2011,AMA2019,ortega1990}. 

In Section \ref{sec:convergence}, following a detailed derivation of the complex-step Newton iteration, we establish its convergence for sufficiently small values of $h$. Specifically, for the Jacobian Newton method, we show that the convergence rate becomes quadratic as $h$ approaches zero. On the other hand, we demonstrate that the Jacobian-free variant of the complex-step Newton method achieves a quadratic rate of convergence even for moderately small values of $h$. We empirically validate these findings and conclude with a study of the influence of the parameter $h$ across various applications.

In Section \ref{sec:application} we demonstrate the use of the complex-step Jacobian-free Newton method for the solution of nonlinear systems obtained during the numerical integration of stiff ordinary differential equations by a symplectic Runge-Kutta method, namely, the fourth-order Gauss-Legendre Runge-Kutta method of order four \cite{SC1994}. Moreover, in the same section we consider the discrete nonlinear Schr\"{o}dinger (DNLS) equation \cite{Kev}. First we obtain a steady-state solution of this equation by solving the corresponding nonlinear system of equations using the complex-step Newton method. Then we employ the implicit fourth-order Gauss-Legendre Runge-Kutta method for the time discretization of the ODE system of the corresponding DNLS equation. Due to its symplectic nature, the particular time-integration method demonstrates excellent conservation properties by conserving the $\ell_2$-norm of the solution (quadratic functional) while almost conserving the Hamiltonian \cite{HLW2006}. 

\section{Convergence}\label{sec:convergence}

\subsection{Scalar equations}\label{sec:scalar}

First we review the derivation of the complex-step Newton iteration. Consider the equation $f(x)=0$ where $f:I\to\mathbb{R}$ is a real analytic function in a closed interval $I\subset \mathbb{R}$ appropriately chosen such that $f'(x)\not=0$ for all $x\in I$. We assume that the equation $f(x)=0$ has a unique solution $x^\ast\in I$. Given $x_0\in I$, we define the classical Newton iteration as
\begin{equation}\label{eq:csnewton1}
    x_{k+1}=x_k-\frac{f(x_k)}{f'(x_{k})}\quad \text{for $k=0,1,\dots$}\ .
\end{equation}
In order to estimate alternatives of the first derivative using complex number arithmetic (\cite{LM1967,ST1998}) we consider the Taylor expansion of the function $g(h)=\Im f(x+\I h)$ for $x\in I$ and $h\in \mathbb{R}$ around $h=0$
\begin{equation}\label{eq:onedtaylor}
g(h)=g(0)+g'(0)h+\frac{g''(0)}{2}h^2+\frac{g'''(c_h)}{6}h^3\ ,
\end{equation} where $0<c_h<h$. 
Obviously it is $g(0)=0$. Writing $f(x+\I h)=r(x,h)+\I s(x,h)=r(x,h)+\I g(h)$ we have that $g'(h)=s_h(x,h)$, where the subscript $h$ here denotes partial differentiation with respect of $h$. On the other hand, $\frac{d}{dh}f(x+\I h)=\I f'(x+\I h)=r_h(x,h)+\I s_h(x,h)=r_h(x,h)+\I g'(h)$. Thus, for $h=0$ we have $\I f'(x)=r_h(x,0)+\I g'(0)$. Taking the imaginary part of both sides we obtain $g'(0)=f'(x)$. Note that the derivative in the last formula coincides with the real derivative $f'(x)$ due to the path-independence of the complex derivatives. Similarly, since $\frac{d^2}{dh^2}f(x+\I h)=-f''(x+\I h)=r_{hh}(x,h)+\I s_{hh}(x,h)$ we have $g''(0)=s_{hh}(x,0)=0$. Finally, $\frac{d^3}{dh^3}f(x+\I h)=-\I f'''(x+\I h)=r_{hhh}(x,h)+\I g'''(h)$ and thus $g'''(c_h)=-\Im[\I f'''(x+\I c_h)]$. Combining the previous formulas in (\ref{eq:onedtaylor}) we obtain to the relation
\begin{equation}\label{eq:cstepd}
f'(x)=\frac{1}{h}\Im f(x+\I h)+R^x(c_h)h^2\ ,
\end{equation}
with $R^x(c_h)=\tfrac{1}{6}\Im[\I f'''(x+\I c_h)]$.
The approximation of the first derivative $f'(x)$ by $\Im f(x+\I h)/h$ is called the complex-step approximation and it does not suffer from cancellation errors like the standard finite difference formulas for values of $h$ even of the order $10^{-20}$ and less, \cite{LM1967,ST1998}. For this reason we can approximate the first derivative up to the machine precision. Based on this approximation, the complex-step Newton iteration is defined as
\begin{equation}\label{eq:cnewtonsc}
x_{k+1}=x_k-h\frac{f(x_k)}{\Im f(x_k+\I h)} \quad \text{for $k=0,1,\dots$}\ .
\end{equation}

The assumption that $f$ is real analytic in an interval containing the root $x^\ast$ can be relaxed; however, it ensures the extension of $f$ into a complex analytic function within a region surrounding the root $x^\ast$. For example if $f(x)=\sum_n a_n (x-x^\ast)^n$ for $x\in D$, where $D$ is closed interval that includes $x^\ast$ in its interior, then there is an $\varepsilon>0$ such that the analytic complex extension of $f$ can be defined as $f(z)=\sum_n a_n (z-x^\ast)^n$ in $D\times [-\I\varepsilon,\I\varepsilon]$. This extension guarantees that the derivatives of $f$ remain uniformly bounded within a closed region of the complex domain, as per Cauchy's integral formula \cite[Section 2.6.2]{AF2003}. The assumption of uniformly bounded first and third derivatives is necessary, as elucidated in the subsequent proofs. In general, we will denote by $C$ any generic positive constant independent of $h$.

\begin{lemma}\label{lem:derivbound}
If the function $f$ is real analytic in a closed interval $I$ and $x^\ast$ in the interior of $I$ with $f(x^\ast)=0$ and $f'(x^\ast)\not=0$, then there are $\bar{h}>0$ and $m,M>0$  independent of $h$ such that $m\leq \frac{1}{h}|\Im f(x+ih)|\leq M$ for all $0<h<\bar{h}$.
\end{lemma}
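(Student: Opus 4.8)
The plan is to start from the complex-step identity already derived in \eqref{eq:cstepd}, which expresses the quantity of interest exactly as a perturbation of the true derivative,
$$\frac{1}{h}\Im f(x+\I h) = f'(x) - R^x(c_h)h^2, \qquad R^x(c_h)=\tfrac16\Im[\I f'''(x+\I c_h)],\quad 0<c_h<h.$$
Both the upper and the lower bound will then follow by controlling $f'(x)$ and the remainder $R^x(c_h)h^2$ separately, and absorbing the $O(h^2)$ remainder into the leading term by choosing $\bar h$ sufficiently small.

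First I would fix a closed subinterval $I'\subset I$ containing $x^\ast$ in its interior. Since $f$ is real analytic on $I$, the complex-analytic extension and the Cauchy integral estimates mentioned above guarantee that $f'''$ is uniformly bounded on the compact complex region $\{x+\I c : x\in I',\ 0\le c\le \bar h_0\}$ for some $\bar h_0>0$; hence $|R^x(c_h)|\le C$ uniformly in $x\in I'$ and $0<c_h<h\le\bar h_0$, with $C=\tfrac16\sup|f'''|$. Likewise $f'$ is continuous, hence bounded, on the compact set $I'$, say $|f'(x)|\le M_1$. The upper bound is then immediate from the triangle inequality: $\frac1h|\Im f(x+\I h)| \le |f'(x)| + |R^x(c_h)|h^2 \le M_1 + C\bar h^2 =: M$ for every $0<h<\bar h\le \bar h_0$.

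For the lower bound I would exploit the nonvanishing of the derivative at the root. Because $f'$ is continuous and $f'(x^\ast)\neq0$, after possibly shrinking $I'$ we may assume $|f'(x)|\ge m_0>0$ for all $x\in I'$; this is exactly where the hypothesis $f'(x^\ast)\neq0$ enters (if one instead assumes $f'\neq0$ throughout $I$, no shrinking is needed). The reverse triangle inequality then yields $\frac1h|\Im f(x+\I h)| \ge |f'(x)| - |R^x(c_h)|h^2 \ge m_0 - C h^2$. Choosing $\bar h \le \min\{\bar h_0,\ \sqrt{m_0/(2C)}\,\}$ forces $Ch^2\le m_0/2$, so the left-hand side is at least $m:=m_0/2>0$ for all $0<h<\bar h$, which establishes both estimates with constants $m,M$ independent of $h$.

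The only genuine subtlety, and the step I expect to require the most care, is the uniformity of the remainder bound: one must verify that a single constant $C$ controls $|f'''(x+\I c_h)|$ simultaneously over all $x$ in the neighborhood and all $c_h\in(0,h)$ as $h$ ranges over $(0,\bar h)$. This is precisely what the analytic extension into a complex strip together with Cauchy's estimates provide, and it is the reason real analyticity, rather than mere $C^3$ smoothness, is invoked. Once this uniform bound is secured, the remaining arguments are elementary applications of the (reverse) triangle inequality and the absorption of the $O(h^2)$ term.
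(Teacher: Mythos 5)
Your proposal is correct and follows essentially the same route as the paper's proof: both start from the identity \eqref{eq:cstepd}, bound $|f'(x)|$ above and below on a neighborhood of $x^\ast$ using continuity and $f'(x^\ast)\neq 0$, bound the remainder $R^x(c_h)$ uniformly via the analytic extension of $f$ to a complex neighborhood, and absorb the $O(h^2)$ term by taking $\bar h$ small. Your version is, if anything, slightly more explicit than the paper's about the uniformity of the bound on $f'''$ over the complex strip and about the concrete choice $\bar h\le\sqrt{m_0/(2C)}$, where the paper simply asserts the existence of suitable $h_1,h_2$.
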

\begin{proof}
Since $f'(x^\ast)\not=0$ there is a neighborhood $J\subset I$ with $x^\ast\in J$ and $m',M'>0$ such that $m'< |f'(x)|< M'$ for all $x\in I'$. Furthermore, due to the analyticity of $f$, there is a closed neighborhood $\mathbb{D}\subset \mathbb{C}$ around $x^\ast$ and a $K>0$ such that $|f'''(z)|<K$ for all $z\in \mathbb{D}$.

From (\ref{eq:cstepd}) we have 
$$
\left|\tfrac{1}{h}\Im f(x+\I h)\right|\leq |f'(x)|+|R^x(c_h)|h^2<M'+|R^x(c_h)|h^2\ ,
$$
and thus there is an $h_1>0$ such that $|\Im f(x+\I h)|/h\leq 2M'$ for all $0<h<h_1$. 
Similarly, 
$$
\left|\tfrac{1}{h}\Im f(x+\I h)\right|\geq |f'(x)|-|R^x(c_h)|h^2>m'-|R^x(c_h)|h^2\ ,
$$
which yields that there is an $h_2>0$ such that $|\Im f(x+\I h)|/h\geq m'/2$, for $0<h<h_2$. We take $\bar{h}=\min(h_1,h_2)$, $m=m'/2$ and $M=2M'$. 
\end{proof}

\subsection{Convergence for scalar equations}

Now we can prove that the complex-step Newton iteration converges. Moreover, we show that the convergence is linear for small values of $h$, and becomes quadratic when $h\to 0$.

\begin{theorem}\label{thm:scalar}
Given a real analytic function $f$ in a closed interval $I$ and $x^\ast$ in the interior of $I$ with $f(x^\ast)=0$ and $f'(x^\ast)\not=0$, then there is $h_0>0$ and an interval $D\subset I$ such that the complex-step Newton iteration $x_k$ of (\ref{eq:cnewtonsc}) converges to $x^\ast$ for all $x_0\in D$ and for all $0<h<h_0$. Furthermore, 
\begin{equation}\label{eq:cratesc}
\lim_{k\to\infty}\left[\lim_{h\to 0} \frac{x_{k+1}-x^\ast}{(x_k-x^\ast)^2}\right]=\frac{f''(x^\ast)}{2f'(x^\ast)}\ .
\end{equation}
\end{theorem}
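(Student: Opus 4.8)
The plan is to reduce everything to one error recursion and then to disentangle the two nested limits. Write $D_h(x)=\tfrac1h\Im f(x+\I h)$ for the complex-step approximation of $f'(x)$, so that (\ref{eq:cstepd}) reads $D_h(x)=f'(x)-R^x(c_h)h^2$ and the iteration (\ref{eq:cnewtonsc}) becomes $x_{k+1}=x_k-f(x_k)/D_h(x_k)$. Expanding $f$ about $x_k$ and evaluating at $x^\ast$, Taylor's theorem with Lagrange remainder gives $0=f(x^\ast)=f(x_k)+f'(x_k)(x^\ast-x_k)+\tfrac12 f''(\eta_k)(x^\ast-x_k)^2$ for some $\eta_k$ between $x_k$ and $x^\ast$. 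Substituting the resulting expression for $f(x_k)$ into the iteration and using $1-f'(x_k)/D_h(x_k)=-R^{x_k}(c_h)h^2/D_h(x_k)$, I obtain
\begin{equation}\label{eq:errrec}
x_{k+1}-x^\ast=-\frac{R^{x_k}(c_h)h^2}{D_h(x_k)}(x_k-x^\ast)+\frac{f''(\eta_k)}{2D_h(x_k)}(x_k-x^\ast)^2\ .
\end{equation}
This single identity carries everything needed for both the convergence claim and the asymptotic constant.

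For convergence I would bound (\ref{eq:errrec}). By Lemma \ref{lem:derivbound}, $|D_h(x_k)|\geq m$ for $0<h<\bar h$, while the analytic extension of $f$ bounds $|f''|$ and $|R^{x}(c_h)|=\tfrac16|\Im[\I f'''(x+\I c_h)]|$ uniformly on a closed complex neighborhood of $x^\ast$. Hence there are constants $C_1,C_2$ independent of $h$ with $|x_{k+1}-x^\ast|\leq C_1 h^2|x_k-x^\ast|+C_2|x_k-x^\ast|^2$. Choosing $D=[x^\ast-\delta,x^\ast+\delta]$ with $C_2\delta\leq\tfrac14$ and $h_0\leq\bar h$ with $C_1 h_0^2\leq\tfrac14$ gives $|x_{k+1}-x^\ast|\leq\tfrac12|x_k-x^\ast|$ whenever $x_k\in D$. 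An induction then keeps all iterates in $D$ and forces $x_k\to x^\ast$ geometrically, which is the asserted (linear) convergence.

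For the rate the subtle point is that each iterate depends on $h$; write $x_k=x_k(h)$ and let $x_k(0)$ denote the classical Newton iterate (\ref{eq:csnewton1}) from the same $x_0$. Since $D_h(x)\to f'(x)$ uniformly on $D$ as $h\to0$, and the classical Newton map $x\mapsto x-f(x)/f'(x)$ is a contraction on $D$ (shrinking $D$ if needed, as its derivative vanishes at $x^\ast$), an induction on $k$ starting from $x_0(h)=x_0$ yields $x_k(h)\to x_k(0)$, and likewise $x_{k+1}(h)\to x_{k+1}(0)$, for each fixed $k$. Dividing (\ref{eq:errrec}) by $(x_k-x^\ast)^2$ and letting $h\to0$, the first term vanishes (a bounded quantity times $h^2$, with denominator bounded away from $0$ provided $x_k(0)\neq x^\ast$), so by continuity the inner limit equals the classical Newton ratio $\big(x_{k+1}(0)-x^\ast\big)/\big(x_k(0)-x^\ast\big)^2$. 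The standard Newton error formula writes this ratio as $f''(\zeta_k)/\big(2f'(x_k(0))\big)$ with $\zeta_k$ between $x_k(0)$ and $x^\ast$; since classical Newton converges, $x_k(0)\to x^\ast$ and $\zeta_k\to x^\ast$ as $k\to\infty$, giving the outer limit $f''(x^\ast)/\big(2f'(x^\ast)\big)$ and establishing (\ref{eq:cratesc}).

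The main obstacle I expect is the joint handling of the two nested limits together with the $h$-dependence of the iterates and of the intermediate point $\eta_k$: one must prove $x_k(h)\to x_k(0)$ rigorously (uniform convergence of the iteration map plus induction) and ensure the denominators $x_k(0)-x^\ast$ do not vanish, so that passing first to the inner limit and then to the outer limit is legitimate. Tracking $\eta_k$ explicitly is awkward, so the clean route is to bypass it by using continuity of the Newton step in $(x,h)$ to reduce the inner limit directly to the classical Newton ratio, and only then to invoke the familiar asymptotic error constant of Newton's method.
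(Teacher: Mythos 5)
Your proof is correct, but it takes a genuinely different route from the paper's on both halves of the statement. For convergence, the paper casts the iteration as a fixed-point map $G_h(x)=x-h f(x)/\Im f(x+\I h)$, computes $G_h'(x^\ast)=-\tilde{R}(c_h)h^2$ and invokes the fixed-point (contraction) theorem, whereas you bound your error recursion directly, $|e_{k+1}|\leq C_1h^2|e_k|+C_2|e_k|^2$, and run an induction that keeps all iterates in $D$; your version is more elementary and makes the admissible sizes of $h_0$ and $D$ explicit. For the rate, the paper expands $f(x_k)$ and the complex-step derivative about $x^\ast$ and then passes to the limit $h\to 0$ inside the resulting recursion, implicitly treating $e_k$, $c_k$, $d_k$ (all of which depend on $h$ through $x_k$) as fixed in that limit; you instead confront this $h$-dependence head-on, proving $x_k(h)\to x_k(0)$ by induction from the uniform convergence $D_h\to f'$ on $D$, identifying the inner limit with the classical Newton ratio $\bigl(x_{k+1}(0)-x^\ast\bigr)/\bigl(x_k(0)-x^\ast\bigr)^2$, and only then invoking the standard asymptotic error constant of Newton's method. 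This added care with the nested limits is the main thing your approach buys: it is the more rigorous treatment of exactly the point the paper glosses over, and it also neatly sidesteps tracking the intermediate point $\eta_k$. What the paper's route buys in exchange is the explicit fixed-$h$ linear-rate constant $\lim_{k\to\infty}e_{k+1}/e_k=-R^x(c_h)h^2/\bigl(f'(x^\ast)-R^x(c_h)h^2\bigr)$, which your geometric bound with factor $\tfrac12$ dominates but does not identify. Both proofs (yours explicitly, the paper's tacitly) assume $x_k\neq x^\ast$ for all $k$ so that the ratios in the limits are well defined.
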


\begin{proof}
First we establish the convergence of the method by recalling that the conditions of Lemma \ref{lem:derivbound} imply the existence of constants $m, M>0$, such that $m<|f'(x)|<M$ for all $x\in J$ where $J\subset I$. Additionally, there exists a closed neighborhood $\mathbb{D}\subset \mathbb{C}$ of $x^\ast$ and a constant $K>0$ such that $|f'''(z)|<K$ for all $z\in \mathbb{D}$. Having established in Lemma \ref{lem:derivbound} that the complex-step approximation of the derivative is bounded by a bound that is independent of $h$ for $0<h<\bar{h}$, we rewrite (\ref{eq:cstepd}) as
\begin{equation}\label{eq:fracbound}
h\frac{f'(x)}{\Im f(x+\I h)}=1+\tilde{R}(c_h)h^2\ ,
\end{equation}
where $\tilde{R}(c_h)=\frac{R^x(c_h)}{\tfrac{1}{h} \Im f(x+\I h)}$. 
Using the lower bound of the complex-step approximation, we have
$|\tilde{R}(c_h)|\leq |R^x(c_h)|/m$.
Thus we have that $|\tilde{R}(c_h)|\leq K/6m$ where the bound is independent of $h$.

The complex-step Newton iteration (\ref{eq:cnewtonsc}) is written as a fixed point equation $x_{k+1}=G_h(x_k)$, for $k=0,1,\dots$ where
$$G_h(x)=x-h\frac{f(x)}{\Im f(x+\I h)}\ .$$
Taking into account that $f$ is analytic, (\ref{eq:fracbound}) and that $f(x^\ast)=0$, we have that
\begin{equation}\label{eq:deriv2}
G'_h(x^\ast)=1-h\frac{f'(x^\ast)}{\Im f(x^\ast+\I h)}=-\tilde{R}(c_h)h^2\ .
\end{equation}
From Lemma \ref{lem:derivbound} and the previous comments we deduce that there is an $\bar{h}>0$ such that for $h\in(0,\bar{h})$ the derivative $|G'_h(x^\ast)|\leq Ch^2<1$ and thus $G_h$ is a contraction, and we conclude by the fixed point theorem (cf. \cite{ortega1990}) that there is a neighborhood $D$ of $x^\ast$ such that for any $x_0\in D$ the iteration $x_{k+1}=G_h(x_k)$ converges to $x^\ast$ as $k\to\infty$. In the sequel we denote the error of the approximation by $e_k=x_k-x^\ast$, and thus we have that $\lim_{k\to\infty}e_k=0$. 

Since $f$ is analytic, we have the following Taylor expansions for the function $f$,
\begin{equation}
f(x_k) = f(x^\ast)+(x_k-x^\ast)f'(x^\ast)+\frac{(x_k-x^\ast)^2}{2}f''(c_k)=e_k f'(x^\ast)+\frac{e_k^2}{2}f''(c_k)\ , 
\end{equation}
and its derivative
\begin{equation}
\frac{1}{h}\Im f(x_k+\I h) = f'(x_k)-R^x(c_h)h^2=f'(x^\ast)+e_kf''(d_k)-R^x(c_h)h^2\ ,
\end{equation}
where $c_k$ and $d_k$ are constants between $x^\ast$ and $x_k$, while $0<c_h<h$ as before. Substituting these expansions into (\ref{eq:cnewtonsc}) we obtain
\begin{equation}
x_{k+1}=x_k-\frac{e_k f'(x^\ast)+\frac{e_k^2}{2}f''(c_k)}{f'(x^\ast)+e_k f''(d_k)-R^x(c_h)h^2}\ .
\end{equation}
Subtracting $x^\ast$ from both sides and denoting $e_k=x^k-x^\ast$ the error at the $k$-th iteration, the last equation yields
\begin{align}
e_{k+1} &= e_k - \frac{e_k f'(x^\ast)+\frac{e_k^2}{2}f''(c_k)}{f'(x^\ast)+e_k f''(d_k)-R^x(c_h)h^2}\nonumber \\
&=e_k^2\frac{f''(d_k)-\tfrac{1}{2}f''(c_k)}{f'(x^\ast)+e_k f''(d_k)-R^x(c_h)h^2}-e_k \frac{R^x(c_h)h^2}{f'(x^\ast)+e_k f''(d_k)-R^x(c_h)h^2}\ . \label{eq:ntr2}
\end{align}
From this we extract two pieces of information. First, dividing by $e_k$ and taking the limit $k\to \infty$ we have that
$$\lim_{k\to\infty}\frac{e_{k+1}}{e_{k}}=-\frac{R^x(c_h)h^2}{f'(x^\ast)-R^x(c_h)h^2}\ .$$
Let $B=\sup_{z\in\mathbb{D}}|f'''(z)|>0$ where $\mathbb{D}$ is a neighborhood of $x^\ast$, we define $h_1=\sqrt{3|f'(x^\ast)|/B}$. Then for $0<h<h_0=\min\{\bar{h},h_1\}$ we have that
$$\frac{|R^x(c_h)h^2|}{|f'(x^\ast)-R^x(c_h)h^2|}\leq \frac{1}{\frac{|f'(x^\ast)|}{|R^x(c_h)h^2|}-1}<1\ .$$ Therefore, the complex-step Newton method converges for all $h<h_0$.

Moreover, we see that the error behaves better as $h\to 0$. From equation (\ref{eq:deriv2}) we have that $\lim_{h\to 0}G'_h(x^\ast)=0$, and thus 
$$\lim_{h\to 0}\frac{x_{k+1}-x^\ast}{x_k-x^\ast}=\lim_{h\to 0} \frac{G_h(x_k)-G_h(x^\ast)}{x_k-x^\ast}=\lim_{h\to 0} \left[\frac{G_h(x_k)-G_h(x^\ast)}{x_k-x^\ast}-G'_h(x^\ast)\right]\ . $$
Thus, 
$$ \lim_{k\to\infty}\left[\lim_{h\to 0}\frac{e_{k+1}}{e_k}\right]=0\ .$$
Additionally, we have from (\ref{eq:ntr2}) that
$$\frac{e_{k+1}}{e_{k}^2}=\frac{f''(d_k)-\tfrac{1}{2}f''(c_k)}{f'(x^\ast)+e_k f''(d_k)-R^x(c_h)h^2}- \frac{R^x(c_h)h^2}{e_k[f'(x^\ast)+e_k f''(d_k)-R^x(c_h)h^2]}\ , $$
and more precisely, by taking $h\to 0$ we obtain
$$\lim_{h\to 0}\frac{e_{k+1}}{e_{k}^2}=\frac{f''(d_k)-\tfrac{1}{2}f''(c_k)}{f'(x^\ast)+e_k f''(d_k)}\ ,$$
and thus
$$\lim_{k\to\infty}\left[\lim_{h\to 0} \frac{e_{k+1}}{e_{k}^2}\right]=\frac{f''(x^\ast)}{2f'(x^\ast)}\ ,$$
since both $c_k\to x^\ast$ and $d_k\to x^\ast$ as $k\to \infty$, which completes the proof.
\end{proof}

\subsection{Experimental convergence rate}

This result implies that for sufficiently small values of $h \ll 1$, the observed convergence will be quadratic. We experimentally study the convergence and the influence of the complex-step $h$ on the convergence using the simple equation $f(x) = 0$ with $f(x) = x(e^{x/2} + 1)$, where $x^\ast = 0$ is an obvious root. Moreover, $f'(x^\ast) \neq 0$, and the above theory is applied to this case in a straightforward manner. 
\begin{figure}[ht!]
  \centering
\includegraphics[width=\columnwidth]{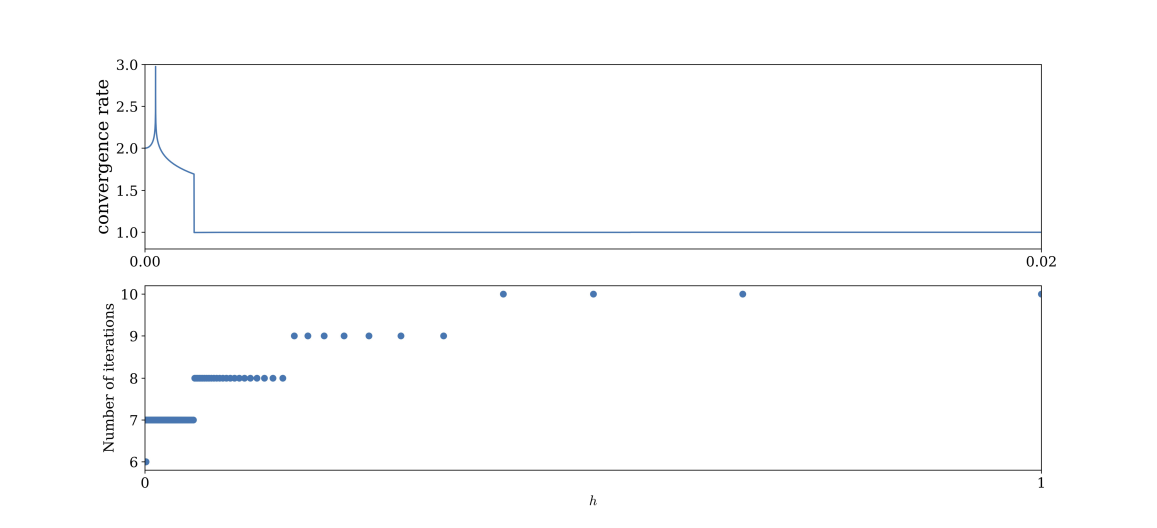}
  \caption{Convergence rate as a function of $h$ for a scalar function $f$}
  \label{fig:rates1}
\end{figure}

To study the influence of $h$, we consider the values $h=2/n$ for $n=1,2,\dots,10^6$. Figure \ref{fig:rates1} presents the experimental convergence rate $\log|e_{k+1}/e_{k}|/\log|e_{k}/e_{k-1}|$, where $e_k=x_k-x^\ast$ is the error of the $k$-th iteration as a function of $h$ for the last three iterations before the error $|x_k-x^\ast|$ becomes less than the prescribed tolerance. As a convergence criterion we used the inequality $|x_k-x^\ast|\leq 10^{-14}$ and initial guess $x_0=2.5$. 

In Figure \ref{fig:rates1}, we observe that the method converged clearly with rate $1$ for large values of $h$  while it accelerated after a threshold value. The maximum convergence rate was observed for $h\approx 0.00023635$.  As $h$ became much smaller than $1$, the convergence rate was stabilized to $2$ as predicted by the Theorem \ref{thm:scalar}, while for values of $h$ of $O(1)$ we observe that the convergence rate is 1.  In all cases the error became smaller than the tolerance within 11 iterations. Similar behaviour was observed when we chose the tolerance to be $10^{-15}$. We proceed next with the study of the complex-step Newton method for systems of algebraic equations, where the situation is slightly different.

\subsection{Systems of equations}\label{sec:systems}

Consider a system of equations of the form $\bF(\bx)=\bo$ where $\bF: U\subset \mathbb{R}^n\to \mathbb{R}^n$ is a vector valued function $\bF=(F_1,F_2,\dots,F_n)^T$ with $n\geq 2$. We assume that $F_i$ are real analytic functions on $U$ and $\bx=(x_1,x_2,\dots, x_n)^T$. In this way, all the derivatives of $F_i$ can be extended to complex analytic functions in a closed neighborhood of $\bx^\ast$ similarly to the one-dimensional case. Moreover, $\bF$ is Fr\'{e}chet-differentiable in a neighborhood of $\bx^\ast$.  Given $\bx^{(0)}\in U$, we define the classical Newton iteration as
$$\bx^{(k+1)}=\bx^{(k)}-\bDF^{-1}(\bx^{(k)}) \bF(\bx^{(k)})\quad \text{for $k=0,1,\dots$}\ ,$$
where the $\bDF$ is the Jacobian matrix with entries the derivatives $[\bDF]_{i,j}=\partial_j F_i$ where $\partial_j =\frac{\partial}{\partial x_j}$.
For convenience in the implementation we usually express the general Newton iteration as a linear system of equations in the form:
\begin{equation}\label{eq:classnewt}
\bDF(\bx^{(k)})\bu^{(k)} = \bF(\bx^{(k)}),\qquad \bx^{(k+1)}=\bx^{(k)}-\bu^{(k)},\quad \text{for $k=0,1,\dots$}\ .    
\end{equation}
Working similarly as in Section \ref{sec:scalar}, we assume that the function $\bF$ can be extended in the neighborhood of $\bx^\ast$ in the complex space and that for $\bz=\bx+\I\by$ we can write $\bF(\bz)=\br(\bx,\by)+\I \bs(\bx,\by)$, where $\bs(\bx,\by)=\Im\bF(\bz)$. We set $\bg(h)=\Im \bF(\bx+\I h \bv)=\bs(\bx,h\bv)$ where $\bv\in \mathbb{R}^n$. This implies that $\bg'(h)=\nabla_\by \bs(\bx,h\bv)\bv$, where $\nabla_\by\bs$ denotes the Jacobian matrix of $\bs(\bx,\by)$ with respect to vector $\by$. On the other hand, if $\nabla_\bz \bF(\bz)$ denotes the complex Jacobian matrix with entries $[\partial F_i/\partial z_j]_{ij}$, then $\partial_h \bF(\bx+\I h \bv)= \I \nabla_\bz \bF(\bx+\I h \bv)\bv=\nabla_\by \br(\bx,h\bv)\bv+\I\nabla_\by \bs(\bx,h\bv)\bv$. Thus, we have that $\bg'(0)=\nabla_\by \bs(\bx,0)\bv=\nabla_\bz \bF(\bx)\bv=\bD\bF(\bx)\bv$, where the last equality holds because of the path-independence of complex derivatives. 

Then, the Taylor expansion of $\bg$ about $h=0$ will be
$$\bg(h)=\bg(0)+\bg'(0)h+\frac{\bg''(0)}{2}h^2+\frac{\bg'''(c_h)}{6}h^3\ ,$$ here $0<c_h<h$, while $\bg(0)=\bo$, $\bg'(0)=\bDF(\bx)\bv$, $\bg''(0)=\bo$ and we denote the third derivative vector $-\bg'''(c_h)/6$ as $\bR^\bx(c_h)=\bR^\bx(c_h;\bv)$ that depends on $\bv$ and has $\ell$-entry (cf. \cite{Taylor2012})
$$[\bR^\bx(c_h;\bv)]_\ell=-\frac{[\bg'''(c_h)]_\ell}{6}=\frac{1}{6}\sum_{i,j,k=1}^n\Im\left[\I\frac{ \partial^3 F_\ell(\bx+\I c_h \bv)}{\partial v_i\partial v_j \partial v_k}v_iv_jv_k\right]\ .$$
Taking absolute values we obtain the bounds
\begin{equation}\label{eq:residual1}
|[\bR^\bx(c_h;\bv)]_\ell|\leq \frac{1}{6}\sum_{i,j,k=1}^n\left|\Im\left[\I \frac{ \partial^3  F_\ell(\bx+\I c_h \bv)}{\partial v_i\partial v_j \partial v_k}\right]\right|\|\bv\|^3\ .
\end{equation}

Furthermore, we assume that the third-order derivatives of $\bF(\bz)$ are uniformly bounded by a bound independent of $h$, which can be ensured by the analyticity of $\bF$ in a closed region of the complex spaces that includes in its interior the solution $\bx^\ast$. If for example we assume that there is a constant $K\geq 0$ (independent of $h$) such that $|\partial^{3}F_\ell(\bz)/\partial z_i\partial z_j\partial z_k|\leq K$ uniformly for all $\bz$ in a closed neighborhood of $\bx^\ast$ and for all $i,j,k,\ell=1,2,\dots, n$, then we have that
\begin{equation}\label{eq:normres}
\|\bR^\bx(c_h;\bv)\|\leq C\|\bv\|^3\ ,
\end{equation}
where $C>0$ is independent of $h$.
This leads to the approximation 
\begin{equation}\label{eq:cstepvec}
\bDF(\bx)\bv=\frac{1}{h}\Im \bF(\bx+\I h\bv)+ \bR^\bx(c_h;\bv)h^2\ .
\end{equation}
This leads to the approximation of the Jacobian matrix $\bD\bF(\bx)$  by the nonlinear operator $\bD_h\bF(\bx):\mathbb{R}^n\to \mathbb{R}^n$, defined by:
\begin{equation}\label{eq:approx1}
\bD_h\bF(\bx)\bv\doteq \frac{1}{h} \Im \bF(\bx+\I h\bv)\ .
\end{equation}
This approximation leads to the complex-step Jacobian-free Newton method.
An alternative approximation that leads to the complex-step Jacobian Newton method is the following: Denote the unit normal basis vectors of $\mathbb{R}^n$ by $\be_j$, and write $\bv=\sum_{j=1}^n v_j \be_j$. Then formula (\ref{eq:cstepvec}) implies that
\begin{equation}\label{eq:JNM1a}
\bDF(\bx)\bv=\sum_{j=1}^n v_j \bDF(\bx) \be_j =\sum_{j=1}^n v_j \frac{1}{h}\Im \bF(\bx+\I h\be_j)+ \sum_{j=1}^n v_j \bR^\bx(c_h;\be_j) h^2\ .
\end{equation}
It is natural then to consider the approximation of the Jacobian matrix $\bDF(\bx)$ with entries:
\begin{equation}
\partial_j F_i(\bx)\approx\frac{1}{h}[\Im \bF(\bx+\I h\be_j)]_i\ ,
\end{equation}
and define the complex-step approximation $\bJ_h(\bx)$ of the Jacobian matrix  with entries
$$[\bJ_h(\bx)]_{ij}=\frac{1}{h}[\Im \bF(\bx+\I h\be_j)]_i=\frac{1}{h}\Im F_i(\bx+\I h\be_j)\quad \text{for any $h>0$}\ .$$
Moreover, defining the matrix $\bB^\bx$ with columns $\bR^x(c_h,\be_j)$, we write (\ref{eq:JNM1a}) in the matrix form
\begin{equation}\label{eq:JNapp}
\bDF(\bx)\bv=\bJ_h(\bx)\bv+ h^2\bB^\bx\bv\ .
\end{equation}

Based on the last approximation of the Jacobian matrix, we define the complex-step Jacobian Newton method for the system $\bF(\bx)=\bo$ to be the iteration
\begin{equation}\label{eq:twodcsnewton}
\bJ_h(\bx^{(k)})\bu^{(k)}=\bF(\bx^{(k)}),\qquad \bx^{(k+1)}=\bx^{(k)}-\bu^{(k)}, \quad \text{for $k=0,1,2,\dots$}\ .
\end{equation}
where $\bx^{(0)}$ is a given initial guess of the exact solution $\bx^{\ast}$. 

On the other hand, based on the relation (\ref{eq:approx1}), we define the complex-step Jacobian-free Newton method to be the iteration such that for given initial guess $\bx^{(0)}$ 
\begin{equation}\label{eq:linearsysc}
\bD_h\bF(\bx^{(k)})\bu^{(k)}=\bF(\bx^{(k)}),\qquad \bx^{(k+1)}=\bx^{(k)}-\bu^{(k)}, \quad \text{for $k=0,1,2,\dots$}\ .
\end{equation}

We emphasise that the system (\ref{eq:twodcsnewton}) is a linear system of equations, while (\ref{eq:linearsysc}) is a nonlinear system. 

\subsection{Convergence for systems of equations}

We next verify that both iterations are well-defined and converge to the root $\bx^\ast$ when the initial guess lies within an appropriate neighborhood of $\bx^\ast$. Moreover, we estimate its order of convergence. In contrast to the one-dimensional case, the convergence rate of the Jacobian-free Newton method is quadratic for any value of $h$ at which the method can converge while the complex-step Jacobian Newton method share the same properties with the scalar version. To prove this, we follow \cite{ortega1990}, and in particular we start by generalizing the Lemma 8.1.5 of \cite{ortega1990}. In general, we will denote by $C$ any generic positive constant independent of $h$. 

Specifically, we have the following estimates for the matrix $\bJ_h$:
\begin{lemma}\label{lem:ostr1a}
If $\bF:U\subset \mathbb{R}^n\to \mathbb{R}^n$ has real-analytic entries in a neighborhood $U\subset \mathbb{R}^n$, then there is a neighborhood $D\subset \mathbb{R}^n$ and $h_0>0$ such that for any $\bx$, $\by\in D$ and $0<h<h_0$ 
\begin{equation}\label{eq:ostr2a}
\|\bF(\by)-\bF(\bx)-\bJ_h(\bx)(\by-\bx)\|\leq C\left(\|\by-\bx\|^2+h^2\|\by-\bx\|\right)\ ,
\end{equation}
for some positive constant $C$ independent of $h$.
Moreover, there is a constant $C'$ such that 
\begin{equation}\label{eq:ostr3a}
\|(\bJ_h(\by)-\bJ_h(\bx))(\by-\bx)\|\leq C'\left(\|\by-\bx\|^2+h^2\|\by-\bx\|\right)\ .
\end{equation}
\end{lemma}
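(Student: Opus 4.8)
The plan is to reduce both estimates to the classical versions for the exact Jacobian $\bDF$ (Lemma 8.1.5 of \cite{ortega1990}) and then control the complex-step perturbation through the identity (\ref{eq:JNapp}), namely $\bJ_h(\bx)=\bDF(\bx)-h^2\bB^\bx$. The crucial preliminary observation is that the perturbation matrix $\bB^\bx$ is bounded uniformly in $h$ and in $\bx$. Indeed, $\bB^\bx$ has columns $\bR^\bx(c_h;\be_j)$, and by (\ref{eq:normres}) applied with $\bv=\be_j$ we get $\|\bR^\bx(c_h;\be_j)\|\leq C\|\be_j\|^3=C$; since $n$ is fixed, this yields $\|\bB^\bx\|\leq C$ with $C$ independent of $h$, provided the evaluation points $\bx+\I c_h\be_j$ stay inside the closed complex neighborhood where the third derivatives of $\bF$ are bounded.

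For (\ref{eq:ostr2a}) I would add and subtract $\bDF(\bx)(\by-\bx)$ and apply the triangle inequality:
\begin{align*}
\|\bF(\by)-\bF(\bx)-\bJ_h(\bx)(\by-\bx)\| &\leq \|\bF(\by)-\bF(\bx)-\bDF(\bx)(\by-\bx)\|\\
&\quad + \|(\bDF(\bx)-\bJ_h(\bx))(\by-\bx)\|\ .
\end{align*}
The first term is bounded by $C\|\by-\bx\|^2$ via the classical estimate, which holds because analyticity makes $\bDF$ Lipschitz on a compact neighborhood. For the second term, (\ref{eq:JNapp}) gives $\bDF(\bx)-\bJ_h(\bx)=h^2\bB^\bx$, so it equals $h^2\|\bB^\bx(\by-\bx)\|\leq Ch^2\|\by-\bx\|$ by the uniform bound above. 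Summing the two contributions yields (\ref{eq:ostr2a}).

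For (\ref{eq:ostr3a}) I would again insert the exact Jacobian via (\ref{eq:JNapp}), writing $\bJ_h(\by)-\bJ_h(\bx)=(\bDF(\by)-\bDF(\bx))-h^2(\bB^\by-\bB^\bx)$, and use the triangle inequality. The term $\|(\bDF(\by)-\bDF(\bx))(\by-\bx)\|$ is bounded by $C\|\by-\bx\|^2$ by the second part of the classical lemma (Lipschitz continuity of $\bDF$), while $h^2\|(\bB^\by-\bB^\bx)(\by-\bx)\|\leq h^2(\|\bB^\by\|+\|\bB^\bx\|)\|\by-\bx\|\leq Ch^2\|\by-\bx\|$, again by the uniform bound. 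This gives (\ref{eq:ostr3a}).

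The estimates themselves are routine once this decomposition is in place; the main obstacle is the bookkeeping of neighborhoods needed to make every constant genuinely uniform in $h$. Concretely, one must choose the real neighborhood $D$ and the threshold $h_0$ small enough that for all $\bx,\by\in D$ and $0<h<h_0$ the complex points $\bx+\I c_h\be_j$ (and the segment joining $\bx$ to $\by$) lie inside the fixed closed complex region where Cauchy's estimates bound $|\partial^3 F_\ell/\partial z_i\partial z_j\partial z_k|$ by $K$; only then does the column bound on $\bB^\bx$, and hence $\|\bB^\bx\|\leq C$, hold with $C$ independent of $h$. With $D$ compact the Lipschitz constant of $\bDF$ is finite, which secures the classical $\|\by-\bx\|^2$ terms, and the constants $C$ and $C'$ are finally obtained by taking maxima over the finitely many bounds involved.
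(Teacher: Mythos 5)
Your proposal is correct and follows essentially the same route as the paper's proof: both decompose via the identity (\ref{eq:JNapp}) into the classical Newton remainder (bounded by $C\|\by-\bx\|^2$ through the Lipschitz continuity of $\bDF$, a consequence of analyticity) plus the perturbation $h^2\bB^\bx(\by-\bx)$, which is bounded by $Ch^2\|\by-\bx\|$ using the uniform third-derivative bound. Your treatment is in fact slightly more explicit than the paper's on two points the paper leaves implicit: the column-wise bound $\|\bR^\bx(c_h;\be_j)\|\leq C$ giving $\|\bB^\bx\|\leq C$ uniformly, and the final triangle-inequality step for (\ref{eq:ostr3a}).
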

\begin{proof}
We note that by the analyticity of $\bF$ we deduce that there is a constant $C>0$ such that 
\begin{equation}\label{eq:olip}
\|\bDF(\by)-\bDF(\bx)\|\leq C\|\by-\bx\|\ ,
\end{equation}
for all $\bx$, $\by$ in a neighborhoud of $\bx^\ast$ \cite[Corolary 3.3.5]{ortega2000}. Moreover, there is a  neighborhood $\mathbb{D}\subset \mathbb{C}^n$ of $\bx^\ast$ and a constant $K\geq 0$ independent of $h$ such that $|\partial^{3}F_\ell(\bz)/\partial z_i\partial z_j\partial z_k|\leq K$ uniformly for all $i,j,k,\ell=1,2,\dots, n$ and $\bz\in \mathbb{D}$.

With the help of (\ref{eq:JNapp}), we write the quantity in (\ref{eq:ostr2a}) as
$$\bF(\by)-\bF(\bx)-\bJ_h(\bx)(\by-\bx)=\bF(\by)-\bF(\bx)-\bDF(\bx)(\by-\bx)-h^2\bB^\bx(\by-\bx)\ .$$
From the definition of $\bB^{\bx}$, we have
$$\|\bB^\bx(\by-\bx)\|\leq C \|\by-\bx\|\ .$$
Note that (cf. \cite[Lemma 8.1.2]{ortega1990})
$$\bF(\by)-\bF(\bx)-\bDF(\bx)(\by-\bx)=\int_0^1[\bDF(\bx+t(\by-\bx))-\bDF(\bx)](\by-\bx)~dt\ .$$
Therefore, we have
$$\begin{aligned}
\|\bF(\by)-\bF(\bx)-\bJ_h(\bx)(\by-\bx)\|&\leq \| \bF(\by)-\bF(\bx)-\bDF(\bx)(\by-\bx)\|+ C h^2\|\by-\bx\|\\
&\leq \int_0^1\|\bDF(\bx+t(\by-\bx))-\bDF(\bx)\|\|\by-\bx\|dt+ C h^2\|\by-\bx\|\\
&\leq C\left(\|\by-\bx\|^2+h^2\|\by-\bx\|\right)\ .
\end{aligned}$$
Similarly, using (\ref{eq:JNapp}) and (\ref{eq:olip}), we have
$$
\|(\bJ_h(\bx)-\bJ_h(\by))(\by-\bx)\| \leq \|(\bDF(\bx)-\bDF(\by))(\by-\bx)\|+h^2\|\bB^\bx(\by-\bx)-\bB^\by(\by-\bx)\|\ ,
$$
from which we obtain the inequality (\ref{eq:ostr3a}).
\end{proof}

We are now ready to prove the convergence of the complex-step Jacobian Newton iteration.

\begin{theorem}\label{thm:vecconverg}
Let the function $\bF=(F_1,F_2,\dots,F_n):U\subset \mathbb{R}^n\to \mathbb{R}^n$ with $F_i$ real analytic functions in a closed domain $U$ and $\bx^\ast$ is the unique vector in the interior of $U$ such that $\bF(\bx^\ast)=\bo$. If the matrix $\bDF(\bx^\ast)$ is invertible, then there is a neighborhood $D\subset \mathbb{R}^n$ of $\bx^\ast$ and $h_0>0$ such that for any initial guess $\bx^{(0)}\in D$ and for all $0<h<h_0$, the complex-step Jacobian Newton iteration (\ref{eq:twodcsnewton}) is well-defined and converges to $x^\ast$. Moreover, there is a constant $C\geq0$ independent of $h$ such that
\begin{equation}\label{eq:errest}
\lim_{k\to \infty}\left[\lim_{h\to 0}\frac{\|\bx^{(k+1)}-\bx^\ast\|}{\|\bx^{(k)}-\bx^\ast\|^2}\right]=C\ .
\end{equation}
\end{theorem}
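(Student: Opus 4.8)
The plan is to transcribe the scalar argument of Theorem~\ref{thm:scalar} into the several-variable Ostrowski--Ortega framework, using Lemma~\ref{lem:ostr1a} as the vector analogue of the affine-approximation bound. The first task is well-definedness: I must show that $\bJ_h(\bx^{(k)})$ is invertible with a uniformly bounded inverse. Because $\bF$ is real analytic and $\bDF(\bx^\ast)$ is invertible, continuity of $\bDF$ and the openness of the set of invertible matrices furnish a closed ball $\bar D$ about $\bx^\ast$ on which $\bDF(\bx)^{-1}$ exists and satisfies $\|\bDF(\bx)^{-1}\|\leq\gamma$. The identity (\ref{eq:JNapp}) reads $\bJ_h(\bx)=\bDF(\bx)-h^2\bB^\bx$, and the columns of $\bB^\bx$ are the residual vectors $\bR^\bx(c_h;\be_j)$, which by (\ref{eq:normres}) are bounded independently of $h$; hence $\|h^2\bB^\bx\|\leq Ch^2$. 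Choosing $h_0$ so small that $\gamma C h_0^2<1$, the Banach perturbation lemma gives invertibility of $\bJ_h(\bx)$ on $\bar D$ with $\|\bJ_h(\bx)^{-1}\|\leq\beta$ for a constant $\beta$ independent of $h$ and of $\bx\in\bar D$.

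For convergence, set $\be_k:=\bx^{(k)}-\bx^\ast$. Rewriting (\ref{eq:twodcsnewton}) and using $\bF(\bx^\ast)=\bo$,
$$\be_{k+1}=\be_k-\bJ_h^{-1}(\bx^{(k)})\bF(\bx^{(k)})=\bJ_h^{-1}(\bx^{(k)})\big[\bJ_h(\bx^{(k)})\be_k-\big(\bF(\bx^{(k)})-\bF(\bx^\ast)\big)\big].$$
Applying Lemma~\ref{lem:ostr1a} with the choice $\bx=\bx^{(k)}$, $\by=\bx^\ast$ to the bracketed term, together with the uniform bound $\|\bJ_h^{-1}\|\leq\beta$, yields the master estimate
$$\|\be_{k+1}\|\leq C\big(\|\be_k\|^2+h^2\|\be_k\|\big)=C\big(\|\be_k\|+h^2\big)\|\be_k\|.$$
Shrinking $D$ to a ball of radius $\delta$ and further shrinking $h_0$ so that $C(\delta+h_0^2)=:\theta<1$ turns the trailing factor into a contraction constant; a straightforward induction then shows the iterates remain in $D$ and $\|\be_k\|\leq\theta^k\|\be_0\|\to0$, establishing convergence for every $\bx^{(0)}\in D$ and every $0<h<h_0$.

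The delicate part, and where I expect the main obstacle, is the rate (\ref{eq:errest}). Dividing the master estimate by $\|\be_k\|^2$ produces the term $Ch^2/\|\be_k\|$, which diverges as $k\to\infty$ for fixed $h$; this is exactly why the order of the limits matters and why one must send $h\to0$ first. To handle the inner limit I would argue, as in the scalar proof, that for each fixed $k$ the iterate $\bx^{(k)}$ depends continuously on $h$: since $\bJ_h(\bx)\to\bDF(\bx)$ uniformly on $\bar D$ as $h\to0$ (the $h^2\bB^\bx$ term vanishes) and matrix inversion is continuous where it is defined, the finite composition of iteration maps defining $\bx^{(k)}$ converges to the classical Newton iterate. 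Consequently the inner limit $\lim_{h\to0}\|\be_{k+1}\|/\|\be_k\|^2$ coincides with the analogous ratio for the classical Newton iteration (\ref{eq:classnewt}), which is governed by the second-order Taylor term of $\bF$ at $\bx^\ast$ through $\bDF(\bx^\ast)^{-1}$. Letting $k\to\infty$ then drives $\bx^{(k)}\to\bx^\ast$, so this ratio approaches the constant $C$ determined by $\bDF(\bx^\ast)^{-1}$ and the second derivatives of $\bF$ at $\bx^\ast$ that governs the quadratic convergence, which is precisely (\ref{eq:errest}). The technical crux is to justify rigorously the continuous dependence on $h$ of the fixed-index iterates and to pass the $O(h^2)$ residual to zero before the vanishing denominator $\|\be_k\|$ can amplify it.
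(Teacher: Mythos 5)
Your proof is sound and rests on the same key tool as the paper's (Lemma \ref{lem:ostr1a}), but it reaches the conclusion by a genuinely different route in all three stages. For well-definedness, you invert $\bJ_h(\bx)=\bDF(\bx)-h^2\bB^\bx$ by the Banach perturbation lemma, getting a bound $\|\bJ_h^{-1}\|\le\beta$ uniform in $h$ and in $\bx$ over a whole closed ball at once; the paper instead argues pointwise at $\bx^\ast$ via the limit \eqref{eq:limit} and then appeals to Ortega (Section 8.1.8) for invertibility in a ball. For convergence, you run a self-contained contraction induction on the master estimate $\|\be_{k+1}\|\le C(\|\be_k\|+h^2)\|\be_k\|$, obtained by applying \eqref{eq:ostr2a} at $\bx=\bx^{(k)}$, $\by=\bx^\ast$ (note this uses only the first half of Lemma \ref{lem:ostr1a}); the paper instead computes the spectrum of $\bG_h'(\bx^\ast)=\bI-\bJ_h^{-1}(\bx^\ast)\bDF(\bx^\ast)$, shows via \eqref{eq:eigenvl} that every eigenvalue is $O(h^2)$, and invokes Ostrowski's point-of-attraction theorem, then splits the error at $\bx^\ast$ using both \eqref{eq:ostr2a} and \eqref{eq:ostr3a}. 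What each buys: your argument is more elementary (no spectral analysis, no Ostrowski), gives an explicit basin radius $\delta$ and contraction factor $\theta$, and---importantly---your treatment of the iterated limit \eqref{eq:errest} is actually more careful than the paper's: you recognize that $\bx^{(k)}$ itself depends on $h$, and justify the inner limit by continuous dependence of the finite composition of iteration maps as $\bJ_h\to\bDF$ uniformly, reducing the ratio to that of the classical iteration \eqref{eq:classnewt}; the paper simply passes $h\to0$ in the inequality \eqref{eq:limit2} without acknowledging the $h$-dependence of the trajectory, and in any case its inequalities only bound the ratio rather than establish that the limit exists and equals a constant. The one thing the paper's route buys that yours does not is the quantitative spectral statement that the linear-rate constant at $\bx^\ast$ scales like $O(h^2)$, which is the cleanest explanation of the ``linear for fixed $h$, quadratic as $h\to0$'' phenomenon. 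One residual caveat applies to both proofs equally: identifying the inner limit with a single constant $C$ as $k\to\infty$ tacitly assumes the classical-Newton ratio $\|\be_{k+1}\|/\|\be_k\|^2$ converges, which in several variables requires directional convergence of $\be_k/\|\be_k\|$; your reduction makes this hypothesis visible rather than hiding it, but neither argument closes it.
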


\begin{proof}

First we note that the complex-step Jacobian Newton method (\ref{eq:twodcsnewton}) can be expressed as a fixed point method of the form $\bx^{(k+1)}=\bG_h(\bx^{(k)})$ with
\begin{equation}\label{eq:cstepnewtvec}
\bG_h(\bx)=\bx-\bJ_h^{-1}(\bx)\bF(\bx)\quad \text{for all $\bx\in U$}\ .
\end{equation}

The assumption that $F_i$ are real analytic for all $i=1,2,\dots,n$, implies that $\bF$ is Fr\'{e}chet differentiable in a neighborhood $U$ of $\bx^\ast$ and that the derivatives of $F_i$ can be extended to complex functions in a closed neighborhood $\mathbb{D}\subset \mathbb{C}^n$ that contains $\bx^\ast$ in its interior. Thus, we can assume that there is a constant $K>0$ independent of $h$ such that $|\partial^{3}F_\ell(\bz)/\partial z_i\partial z_j\partial z_k|\leq K$ for all $i,j,k,\ell=1,2,\dots, n$ and $\bz\in \mathbb{D}$.
Moreover, we have that complex-step Jacobian matrix $\bJ_h(\bx)$ is continuous.  

Since 
\begin{equation}\label{eq:limit}
\bDF(\bx^\ast)=\lim_{h\to 0} \bJ_h(\bx^\ast)\ ,
\end{equation} we have that there is an $h_1>0$ such that for $0<h<h_1$ the $\det[\bJ_h(\bx^\ast)]\not=0$ and thus is invertible. Moreover from \cite[Section 8.1.8]{ortega1990} we deduce that $\bJ_h(\bx)$ is invertible in a closed ball $S(\bx^\ast,\delta)\subset U$ with center $\bx^\ast$ and appropriate radius $\delta>0$. Thus, the iteration (\ref{eq:cstepnewtvec}) is well-defined. Also, we have that the Fr\'{e}chet derivative of $\bG_h$ is the matrix
\begin{equation}\label{eq:derivative}
\bG'_h(\bx^\ast)=\bI-\bJ_h^{-1}(\bx^\ast)\bDF(\bx^\ast)\ .
\end{equation}
If $\lambda\in \rho(\bG_h'(\bx^\ast))$ is an eigenvalue of the derivative matrix $\bG_h'(\bx^\ast)$ with corresponding eigenvector $\bv$, then we have $\bG_h'(\bx^\ast)\bv-\lambda \bv=\bo$. This yields
\begin{equation}\label{eq:alternform}
(1-\lambda)\bJ_h(\bx^\ast)\bv=\bDF(\bx^\ast)\bv\ .
\end{equation} 
We substitute (\ref{eq:cstepvec}) into (\ref{eq:alternform}) to obtain
$(1-\lambda)\bDF(\bx^\ast)\bv-(1-\lambda) h^2\bB^\bx\bv =\bDF(\bx^\ast)\bv$ or equivalently $$\lambda[h^2 \bB^\bx \bv -\bDF(\bx^\ast)\bv]=h^2 \bB^\bx\bv\ .$$
Taking the Euclidean norm on both sides we obtain
\begin{equation}\label{eq:eigenvl}
|\lambda|= h^2\frac{\|\bB^\bx\bv\|}{\|h^2 \bB^\bx\bv -\bDF(\bx^\ast)\bv\|}\ .
\end{equation}
Therefore there is an $h_2>0$ such that for $0<h<h_2$ the eigenvalue $|\lambda|\leq\sigma<1$. From Ostrowski's Theorem \cite[Theorem 8.1.7]{ortega1990} we have that $\bx^\ast$ is a point of attraction of the iteration $\bx^{(k+1)}=\bG_h(\bx^{(k)})$, i.e. there is a neighborhood $D$ of $\bx^\ast$ such that the iterative method converges to the solution $\bx^\ast$ for any initial guess in $D$ with $\lim_{k\to\infty}\|\bx^{(k)}-\bx^\ast\|=0$. 

By the analyticity of $F_i$ we have that there is a constant $C_0>0$ independent of $h$ such that $\|\bDF(\bx)\|\leq C_0$ for all $\bx\in U$. 
Using (\ref{eq:limit}), the continuity of $\bJ_h(\bx)$ and the fact that $\bJ_j(\bx^\ast)$ is invertible, we deduce that for large $k$ there is an $h_2>0$ such that for $0<h<h_2$ and $C_2,C_3>0$ such that $\|\bJ_h(\bx^{(k)})\|\leq C_2$ and $\|\bJ_h^{-1}(\bx^{(k)})\|\leq C_3$. Using Lemma \ref{lem:ostr1a} and taking $0<h<h_0$ with $h_0=\min(h_1,h_2)$ we have
\begin{align*}
\|\bx^{(k+1)}-\bx^\ast\| &=\|\bx^{(k)}-\bJ_h^{-1}(\bx^{(k)})\bF(\bx^{(k)})-\bx^\ast\|\\
&\leq \|-\bJ_h^{-1}(\bx^{(k)}) \left(\bF(\bx^{(k)})-\bF(\bx^\ast)-\bJ_h(\bx^{\ast})(\bx^{(k)}-\bx^\ast) \right)\|\\
&\quad + \|\bJ_h^{-1}(\bx^{(k)})(\bJ_h(\bx^{(k)})-\bJ_h(\bx^\ast))(\bx^{(k)}-\bx^\ast)\|\\
&\leq \|\bJ_h^{-1}(\bx^{(k)}) \|~ \|\bF(\bx^{(k)})-\bF(\bx^\ast)-\bJ_h(\bx^\ast)(\bx^{(k)}-\bx^\ast)\|\\
&\quad + \|\bJ_h^{-1}(\bx^{(k)})\|\|(\bJ_h(\bx^{(k)})-\bJ_h(\bx^\ast))(\bx^{(k)}-\bx^\ast)\|\\
&\leq C \left( \|\bx^{(k)}-\bx^\ast\|^2+h^2 \|\bx^{(k)}-\bx^\ast\| \right)\ .
\end{align*}
This implies that 
\begin{equation}\label{eq:limit1}
\frac{\|\bx^{(k+1)}-\bx^\ast\|}{\|\bx^{(k)}-\bx^\ast\|}\leq C\left(\|\bx^{(k)}-\bx^\ast\|+h^2 \right)\ ,
\end{equation}
and
\begin{equation}\label{eq:limit2}
\frac{\|\bx^{(k+1)}-\bx^\ast\|}{\|\bx^{(k)}-\bx^\ast\|^2}\leq C\left(1+h^2\frac{1}{\|\bx^{(k)}-\bx^\ast\|} \right)\ .
\end{equation}
Taking the limit $k\to\infty$ in (\ref{eq:limit1}) we obtain linear convergence for small values of $h$. Moreover, taking first the limit $h\to 0$ and then the
limit $k\to\infty$ we obtain the quadratic convergence.
\end{proof}

In the case of the complex-step Jacobian-free Newton method, the situation is different. 
Similarly to Lemma \ref{lem:ostr1a}, we have the following:

\begin{lemma}\label{lem:ostr1}
If $\bF:U\subset \mathbb{R}^n\to \mathbb{R}^n$ has real-analytic entries in a neighborhood $U\subset \mathbb{R}^n$, then there is a neighborhood $D\subset \mathbb{R}^n$ and $h_0>0$ such that for any $\bx$, $\by\in D$ and $0<h<h_0$ 
\begin{equation}\label{eq:ostr2}
\|\bF(\by)-\bF(\bx)-\bD_h\bF(\bx)(\by-\bx)\|\leq C\left(\|\by-\bx\|^2+h^2\|\by-\bx\|^3\right)\ ,
\end{equation}
for some positive constant $C$ independent of $h$.
Moreover, there is a constant $C'$ such that 
\begin{equation}\label{eq:ostr3}
\|\bD_h\bF(\bx)(\by-\bx)-\bD_h\bF(\by)(\by-\bx)\|\leq C'\left(\|\by-\bx\|^2+h^2\|\by-\bx\|^3\right)\ .
\end{equation}
\end{lemma}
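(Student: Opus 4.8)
The plan is to mirror the proof of Lemma \ref{lem:ostr1a}, but to exploit the fact that the nonlinear operator $\bD_h\bF$ applies the complex-step formula directly to the full increment $\by-\bx$ rather than to the unit basis vectors $\be_j$. The single identity that drives everything is the approximation (\ref{eq:cstepvec}), which I would specialize to $\bv=\by-\bx$ to write
$$\bD_h\bF(\bx)(\by-\bx)=\bDF(\bx)(\by-\bx)-h^2\bR^\bx(c_h;\by-\bx)\ ,$$
together with the cubic residual bound (\ref{eq:normres}), namely $\|\bR^\bx(c_h;\by-\bx)\|\leq C\|\by-\bx\|^3$. The crucial point is that the residual here carries the full factor $\|\by-\bx\|^3$, whereas in Lemma \ref{lem:ostr1a} the columns of $\bB^\bx$ are evaluated at the normalized vectors $\be_j$, so only a single power of $\|\by-\bx\|$ survives. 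This is precisely what upgrades the second term from $h^2\|\by-\bx\|$ to $h^2\|\by-\bx\|^3$.

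For (\ref{eq:ostr2}) I would add and subtract $\bDF(\bx)(\by-\bx)$ and split into the classical Newton remainder and the complex-step residual:
$$\bF(\by)-\bF(\bx)-\bD_h\bF(\bx)(\by-\bx)=\big[\bF(\by)-\bF(\bx)-\bDF(\bx)(\by-\bx)\big]+h^2\bR^\bx(c_h;\by-\bx)\ .$$
The bracketed term is handled exactly as in Lemma \ref{lem:ostr1a}: writing it as $\int_0^1[\bDF(\bx+t(\by-\bx))-\bDF(\bx)](\by-\bx)\,dt$ and invoking the Lipschitz estimate (\ref{eq:olip}) bounds it by $C\|\by-\bx\|^2$. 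The residual term is controlled by $h^2$ times (\ref{eq:normres}), giving $Ch^2\|\by-\bx\|^3$, and adding the two contributions yields the claimed inequality.

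For (\ref{eq:ostr3}) I would expand both $\bD_h\bF(\bx)(\by-\bx)$ and $\bD_h\bF(\by)(\by-\bx)$ via the same identity, so that
$$\bD_h\bF(\bx)(\by-\bx)-\bD_h\bF(\by)(\by-\bx)=\big[\bDF(\bx)-\bDF(\by)\big](\by-\bx)-h^2\big[\bR^\bx(c_h;\by-\bx)-\bR^\by(\tilde c_h;\by-\bx)\big]\ .$$
The first term is $\leq C\|\by-\bx\|^2$ by (\ref{eq:olip}), and the two residuals are each bounded by $C\|\by-\bx\|^3$ through (\ref{eq:normres}), so the triangle inequality finishes the estimate. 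The only subtlety, and the step I would be most careful about, is this residual difference: the intermediate points $c_h$ and $\tilde c_h$ produced by the two Taylor expansions need not coincide, so there is no cancellation between $\bR^\bx$ and $\bR^\by$. This is harmless, however, because the argument requires only a norm bound and each residual is individually cubic in $\|\by-\bx\|$; the triangle inequality then suffices. It is worth emphasizing that the extra power of $\|\by-\bx\|$ in the $h^2$ terms is exactly what will later deliver quadratic convergence of the Jacobian-free iteration for any fixed admissible $h$, in contrast to the Jacobian variant.
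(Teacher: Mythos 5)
Your proposal is correct and follows essentially the same route as the paper's proof: the same decomposition of $\bF(\by)-\bF(\bx)-\bD_h\bF(\bx)(\by-\bx)$ via (\ref{eq:cstepvec}) with $\bv=\by-\bx$, the same integral representation plus Lipschitz bound (\ref{eq:olip}) for the classical Newton remainder, the same cubic residual bound (\ref{eq:normres}), and the same triangle-inequality treatment of the two residuals for (\ref{eq:ostr3}). Your remarks on why the residual is cubic here (in contrast to Lemma \ref{lem:ostr1a}) and on the non-coincidence of the intermediate points are accurate and, if anything, make the argument slightly more explicit than the paper's.
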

\begin{proof}
With the help of (\ref{eq:cstepvec}) we write the quantity in (\ref{eq:ostr2}) as
$$\bF(\by)-\bF(\bx)-\bD_h\bF(\bx)(\by-\bx)=\bF(\by)-\bF(\bx)-\bDF(\bx)(\by-\bx)+h^2\bR^\bx(c_h;\by-\bx)\ ,$$
where $\|\bR^\bx(c_h;\by-\bx)\|\leq C\|\by-\bx\|^3$ because of (\ref{eq:normres}).
Note that (cf. \cite[Lemma 8.1.2]{ortega1990})
$$\bF(\by)-\bF(\bx)-\bDF(\bx)(\by-\bx)=\int_0^1[\bDF(\bx+t(\by-\bx))-\bDF(\bx)](\by-\bx)~dt\ .$$
Therefore, we have
$$\begin{aligned}
\|\bF(\by)-\bF(\bx)-\bD_h\bF(\bx)(\by-\bx)\|&\leq \| \bF(\by)-\bF(\bx)-\bDF(\bx)(\by-\bx)\|+ C h^2\|\by-\bx\|^3\\
&\leq \int_0^1\|\bDF(\bx+t(\by-\bx))-\bDF(\bx)\|\|\by-\bx\|dt+ C h^2\|\by-\bx\|^3\\
&\leq C\left(\|\by-\bx\|^2+h^2\|\by-\bx\|^3\right)\ .
\end{aligned}$$
Similarly, using again (\ref{eq:cstepvec}) we have
$$\begin{aligned}
\|\bD_h\bF(\bx)(\by-\bx)-\bD_h\bF(\by)(\by-\bx)\| \leq & \|(\bDF(\bx)-\bDF(\by))(\by-\bx)\|\\
&+h^2\|\bR^\bx(c_h;\by-\bx)-\bR^\by(c_h;\by-\bx)\|\ ,
\end{aligned}
$$
from which we obtain the inequality (\ref{eq:ostr3}).
\end{proof}

Specifically, we have the following theorem:

\begin{theorem}\label{thm:jacfreeconv}
Let the function $\bF=(F_1,F_2,\dots,F_n):U\subset \mathbb{R}^n\to \mathbb{R}^n$ with $F_i$ real analytic functions in a closed domain $U$ and $\bx^\ast$ is the unique vector in the interior of $U$ such that $\bF(\bx^\ast)=\bo$. If the matrix $\bDF(\bx^\ast)$ is invertible, then there is a neighborhood $D\subset \mathbb{R}^n$ of $\bx^\ast$ and $h_0>0$ such that for any initial guess $\bx^{(0)}\in D$ and for all $0<h<h_0$, the complex-step Jacobian-free Newton iteration (\ref{eq:twodcsnewton}) is well-defined and converges to $x^\ast$. Moreover, there is a constant $C\geq0$ independent of $h$ such that
\begin{equation}\label{eq:errest}
\lim_{k\to \infty}\frac{\|\bx^{(k+1)}-\bx^\ast\|}{\|\bx^{(k)}-\bx^\ast\|^2}=C\ ,
\end{equation}
for all $0<h<h_0$.
\end{theorem}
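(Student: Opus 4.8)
The plan is to mirror the proof of Theorem \ref{thm:vecconverg}, but to confront the essential new feature that the Jacobian-free step (\ref{eq:linearsysc}) requires solving the \emph{nonlinear} equation $\bD_h\bF(\bx^{(k)})\bu^{(k)}=\bF(\bx^{(k)})$ rather than inverting a matrix. First I would establish that the iteration is well-defined. The operator $\bv\mapsto\bD_h\bF(\bx)\bv=\tfrac{1}{h}\Im\bF(\bx+\I h\bv)$ of (\ref{eq:approx1}) is Fr\'echet differentiable in $\bv$, and a direct computation (using the path-independence of the complex derivative, exactly as in the derivation of (\ref{eq:cstepvec})) shows that its derivative at $\bv=\bo$ equals $\bDF(\bx)$. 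Introducing $\boldsymbol{\Phi}(\bx,\bu)=\bD_h\bF(\bx)\bu-\bF(\bx)$, one finds $\partial_{\bu}\boldsymbol{\Phi}(\bx^\ast,\bo)=\bDF(\bx^\ast)$ (invertible) and $\partial_{\bx}\boldsymbol{\Phi}(\bx^\ast,\bo)=-\bDF(\bx^\ast)$, the latter because $\tfrac1h\Im\partial_jF_i(\bx)=0$ for real $\bx$. Since $\bF(\bx^\ast)=\bo$, the implicit function theorem then yields, for $\bx$ in a neighborhood $D$ of $\bx^\ast$ and $0<h<h_0$, a unique small solution $\bu=\bu(\bx)$ depending analytically on $\bx$ with $\bu(\bx^\ast)=\bo$. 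This makes the iteration $\bx^{(k+1)}=\bG_h(\bx^{(k)})$, $\bG_h(\bx)=\bx-\bu(\bx)$, well-defined and, by shrinking $D$, supplies the a priori bound $\|\bu(\bx)\|\leq C\|\bx-\bx^\ast\|$ that I will need later.

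For convergence I would show that $\bx^\ast$ is a point of attraction. Implicit differentiation of $\boldsymbol{\Phi}(\bx,\bu(\bx))=\bo$ at $\bx^\ast$ gives $\bu'(\bx^\ast)=-[\partial_{\bu}\boldsymbol{\Phi}]^{-1}\partial_{\bx}\boldsymbol{\Phi}=\bI$, hence $\bG_h'(\bx^\ast)=\bI-\bu'(\bx^\ast)=\bo$. Its spectral radius is $0<1$, so Ostrowski's theorem \cite[Theorem 8.1.7]{ortega1990}, exactly as in Theorem \ref{thm:vecconverg}, furnishes a neighborhood of attraction on which $\|\bx^{(k)}-\bx^\ast\|\to 0$.

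It remains to pin down the rate, and here I would invoke Lemma \ref{lem:ostr1} with $\by=\bx^\ast$. Writing $\be^{(k)}=\bx^{(k)}-\bx^\ast$ and using $\bF(\bx^\ast)=\bo$ together with $\bF(\bx^{(k)})=\bD_h\bF(\bx^{(k)})\bu^{(k)}$, the bound (\ref{eq:ostr2}) reads
\[
\bigl\|\bD_h\bF(\bx^{(k)})\bu^{(k)}+\bD_h\bF(\bx^{(k)})(-\be^{(k)})\bigr\|\le C\bigl(\|\be^{(k)}\|^2+h^2\|\be^{(k)}\|^3\bigr).
\]
Because $\bD_h\bF(\bx^{(k)})$ is nonlinear, the two terms cannot simply be added, so I would replace each by the exact identity (\ref{eq:cstepvec}), $\bD_h\bF(\bx)\bv=\bDF(\bx)\bv-h^2\bR^\bx(c_h;\bv)$. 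The linear $\bDF(\bx^{(k)})$-parts then combine into $\bDF(\bx^{(k)})(\bu^{(k)}-\be^{(k)})=-\bDF(\bx^{(k)})\be^{(k+1)}$, while the leftover pieces $h^2\bR^{\bx^{(k)}}(c_h;\bu^{(k)})$ and $h^2\bR^{\bx^{(k)}}(c_h;-\be^{(k)})$ are each bounded by $Ch^2\|\cdot\|^3$ via (\ref{eq:normres}). Using $\|\bu^{(k)}\|\leq C\|\be^{(k)}\|$ and the boundedness of $\bDF(\bx^{(k)})^{-1}$ near $\bx^\ast$, I would arrive at
\[
\|\be^{(k+1)}\|\le C\bigl(\|\be^{(k)}\|^2+h^2\|\be^{(k)}\|^3\bigr),
\]
so that $\|\be^{(k+1)}\|/\|\be^{(k)}\|^2\le C(1+h^2\|\be^{(k)}\|)$; letting $k\to\infty$ with $\be^{(k)}\to\bo$ yields the claimed quadratic rate for every fixed $0<h<h_0$.

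The main obstacle is precisely the nonlinearity of $\bD_h\bF$: it forces the implicit-function-theorem argument for well-definedness and for the a priori bound on $\|\bu^{(k)}\|$, and it blocks the naive linear combination of terms in the error estimate, which is why the detour through the exact identity (\ref{eq:cstepvec}) is needed. The decisive structural fact that makes the rate quadratic for all small $h$—and not merely in the limit $h\to 0$ as in Theorem \ref{thm:vecconverg}—is that the nonlinear remainder obeys the \emph{cubic} bound $\|\bR^\bx(c_h;\bv)\|\le C\|\bv\|^3$ of (\ref{eq:normres}). This is what turns the $h$-dependent error contribution into the higher-order term $h^2\|\be^{(k)}\|^3$ in Lemma \ref{lem:ostr1}, as opposed to the linear-in-error term $h^2\|\be^{(k)}\|$ of Lemma \ref{lem:ostr1a} responsible for the merely asymptotic quadratic rate of the Jacobian variant.
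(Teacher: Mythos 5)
Your proposal is correct, and its first two stages coincide with the paper's proof almost verbatim: the paper also applies the implicit function theorem to the map $\mathcal{F}(\bx,\bu)=\tfrac{1}{h}\Im\bF(\bx+\I h\bu)-\bF(\bx)$ (your $\boldsymbol{\Phi}$), computes the same partial derivatives $\mp\bDF(\bx^\ast)$ at $(\bx^\ast,\bo)$, and deduces $\bG_h'(\bx^\ast)=\bI-\bD\bg(\bx^\ast)=\bo$ so that Ostrowski's theorem gives local convergence. Where you genuinely diverge is the quadratic-rate estimate. The paper inverts the \emph{nonlinear} operator itself: it invokes the Inverse Function Theorem to obtain $[\bD_h\bF(\bx^{(k)})]^{-1}$ as an analytic, Lipschitz map near $\bo$ (inequality (\ref{eq:lips})), rewrites the iteration as $\bx^{(k+1)}=\bx^{(k)}-[\bD_h\bF(\bx^{(k)})]^{-1}\bF(\bx^{(k)})$, and then combines the Lipschitz bound with both estimates (\ref{eq:ostr2}) and (\ref{eq:ostr3}) of Lemma \ref{lem:ostr1}. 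You instead stay entirely with the forward operator: you apply (\ref{eq:ostr2}) at $\by=\bx^\ast$, unfold each term through the exact identity (\ref{eq:cstepvec}), recombine the linear parts into $-\bDF(\bx^{(k)})\be^{(k+1)}$ (using $\bu^{(k)}-\be^{(k)}=-\be^{(k+1)}$), and control the two cubic remainders via (\ref{eq:normres}) together with the a priori bound $\|\bu^{(k)}\|\leq C\|\be^{(k)}\|$ supplied by the implicit function theorem. Your route needs only the bounded invertibility of the \emph{linear} matrix $\bDF(\bx^{(k)})$ near $\bx^\ast$, avoiding the nonlinear inverse operator and its Lipschitz property altogether; the price is that you effectively redo part of the proof of Lemma \ref{lem:ostr1} inline (the (\ref{eq:cstepvec}) expansion) instead of quoting (\ref{eq:ostr3}). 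Both arguments arrive at the same bound $\|\be^{(k+1)}\|\leq C\bigl(\|\be^{(k)}\|^2+h^2\|\be^{(k)}\|^3\bigr)$ and hence the same conclusion, with the same caveat as the paper: what is really proved is that the ratio $\|\be^{(k+1)}\|/\|\be^{(k)}\|^2$ is bounded, uniformly in $0<h<h_0$, by an $h$-independent constant, which is the substance of (\ref{eq:errest}). Your closing diagnosis---that the cubic remainder bound (\ref{eq:normres}), producing the term $h^2\|\be^{(k)}\|^3$ in Lemma \ref{lem:ostr1} rather than the $h^2\|\be^{(k)}\|$ of Lemma \ref{lem:ostr1a}, is exactly what makes the rate quadratic for fixed $h$ rather than only as $h\to 0$---is the correct structural explanation and matches the paper's own discussion.
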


\begin{proof}
We define 
$$\mathcal{F}(\bx,\by)=\frac{1}{h}\Im \bF(\bx+\I h\by)-\bF(\bx)\ .$$
Then, we write the Jacobian-free Newton method as
$$\bx^{(k+1)}=\bx^{(k)}-\bu^{(k)},\quad \text{for $k=0,1,\dots$}\ ,$$
where $\bu^{(k)}$ is the solution of the equation $\mathcal{F}(\bx^{(k)},\bu^{(k)})=0$.

Due to the analyticity of $\bF$ we have that 
$$\bD_\bx\mathcal{F}(\bx,\by)=\frac{1}{h}\Im \bDF(\bx+\I h\by)-\bDF(\bx)\ ,$$ 
and
$$\bD_\by\mathcal{F}(\bx,\by)=\Re \bDF(\bx+\I h\by) \ ,$$
and specifically $\bD_\bx\mathcal{F}(\bx^\ast,\bo)=-\bDF(\bx^\ast)$ and $\bD_\by\mathcal{F}(\bx^\ast,\bo)=\bDF(\bx^\ast)$, are invertible. Applying the implicit function theorem \cite{B1977}, we deduce that there is a neighborhood $D\subset \mathbb{R}^n$ containing $\bx^\ast$ and a unique function $\bg(\bx)$ such that $\bg(\bx^\ast)=\bo$ and $\mathcal{F}(\bx,\bg(\bx))=0$ for all $\bx\in D$. Moreover, $\bD \bg(\bx)=-[\bD_\by \mathcal{F}(\bx,\bg(\bx))]^{-1} \bD_\bx \mathcal{F}(\bx,\bg(\bx))$, and $\bD\bg(\bx^\ast)=\bI$.

With the help of the function $\bg$, we express the complex-step Jacobian-free Newton method in the form of the fixed point iteration:
\begin{equation}\label{eq:JFNM}
\bx^{(k+1)}=\bG_h(\bx^{(k)})\doteq \bx^{(k)}-\bg(\bx^{(k)}),\quad \text{for $k=0,1,\dots$}\ ,
\end{equation}
Due to the analytic properties of $\bF$, we deduce that the complex-step Jacobian-free Newton method is well-defined at least in a neighborhood $D\subset \mathbb{R}^n$ that contains the root $\bx^\ast$. Also, we have that the Fr\'{e}chet derivative of $\bG_h$ at $\bx^\ast$ is the zero matrix
$$\bG'_h(\bx^\ast)=\bI-\bD\bg(\bx^\ast)=\bo\ .$$

Therefore, from Ostrowski's Theorem \cite[Theorem 8.1.7]{ortega1990} we have that $\bx^\ast$ is a point of attraction of the iteration $\bx^{(k+1)}=\bG_h(\bx^{(k)})$, i.e. there is a neighborhood $D$ of $\bx^\ast$ such that the iterative method converges to the solution $\bx^\ast$ for any initial guess in $D$ with $\lim_{k\to\infty}\|\bx^{(k)}-\bx^\ast\|=0$. 

Since $\bDF(\bx^\ast)$ is not singular, we have that $\bDF(\bx)$ in a neighborhood of $\bx^\ast$ is also not singular \cite{ortega1990}. Therefore, from (\ref{eq:approx1}) there is an $h_0>0$ such that for all $0<h<h_0$ and for large $k$ and $\by$ close to $\bo$, the derivative of $\bD_\by [\bD_h\bF(\bx^{(k)})\by]=\Re \bDF(\bx^{(k)}+\I h\by)$ is not singular. By the Inverse Function Theorem \cite{B1977}, we have that $[\bD_h\bF(\bx^{(k)})]^{-1}$ also exists and is analytic in a neighborhood around $\bo$. Therefore, we can write (\ref{eq:JFNM}) in the form \begin{equation}\label{eq:JFNM2}
\bx^{(k+1)}=\bx^{(k)}-[\bD_h\bF(\bx^{(k)})]^{-1} \bF(\bx^{(k)})\ .
\end{equation}
Moreover, since $[\bD_h\bF(\bx^{(k)})]^{-1}$ is analytic in a neighborhood $U$ containing $\bo$, it is also Lipschitz, and for any $\bu,\bv\in U$, there is a constant $C>0$ such that
\begin{equation}\label{eq:lips}
\|[\bD_h\bF(\bx^{(k)})]^{-1}\bu-[\bD_h\bF(\bx^{(k)})]^{-1}\bv\|\leq C \|\bu-\bv\|\ .  
\end{equation}

Using (\ref{eq:JFNM2}), (\ref{eq:lips}), Lemma \ref{lem:ostr1} and taking $0<h<h_0$, we have
\begin{align*}
\|\bx^{(k+1)}-\bx^\ast\| &=\|\bx^{(k)}-[\bD_h\bF(\bx^{(k)})]^{-1}\bF(\bx^{(k)})-\bx^\ast\|\\
&= \|[\bD_h\bF(\bx^{(k)})]^{-1}\left(\bD_h\bF(\bx^{(k)})(\bx^{(k)}-\bx^\ast) \right)-[\bD_h\bF(\bx^{(k)})]^{-1} \left(\bF(\bx^{(k)})-\bF(\bx^\ast)\right)\|\\ 
&\leq C~ \|\bF(\bx^{(k)})-\bF(\bx^\ast)-\bD_h\bF(\bx^{(k)})(\bx^{(k)}-\bx^\ast)\|\\
&\leq C~ \left(\|\bF(\bx^{(k)})-\bF(\bx^\ast)-\bD_h\bF(\bx^\ast)(\bx^{(k)}-\bx^\ast)\|\right.\\
&\quad + \left.\|\bD_h\bF(\bx^{(k)})(\bx^{(k)}-\bx^\ast)-\bD_h\bF(\bx^\ast)(\bx^{(k)}-\bx^\ast)\|\right)\\
&\leq C \left( \|\bx^{(k)}-\bx^\ast\|^2+h^2 \|\bx^{(k)}-\bx^\ast\|^3 \right)\ .
\end{align*}
This implies that for large $k$
$$
\frac{\|\bx^{(k+1)}-\bx^\ast\|}{\|\bx^{(k)}-\bx^\ast\|^2}\leq C\left(1+h^2\|\bx^{(k)}-\bx^\ast\| \right)\ .
$$
Taking the limit $k\to\infty$, we obtain that (\ref{eq:errest}) holds, and the rate is quadratic for all $0<h<h_0$.
\end{proof}

\subsection{Experimental convergence rates}

The implementation of the complex-step Jacobian Newton method is straightforward and involves the assembly of the approximate Jacobian matrix at every iteration and the solution of the corresponding linear systems with any method of our choice. The implementation of the complex-step Jacobian-free Newton method can follow the Newton-Krylov methodology \cite{BS1990,BH1986}, where the nonlinear term $\bD_h\bF(\bx^{(k)})\bu^{(k)}$ is considered as an approximation of the matrix-vector product $\bDF(\bx^{(k)})\bu^{(k)}\approx \frac{1}{h} \Im \bF(\bx+\I h\bv)$ in Newton's method, and the system $\bDF(\bx^{(k)})\bu^{(k)}=\bF(\bx^{(k)})$ can be solved approximately by Krylov methods without the knowledge the Jacobian matrix $\bDF$ but only of the corresponding matrix-vector product. As a result, we obtain an iterative scheme that converges with quadratic rate.

To explore the differences between the two variants of the complex-step Newton method we consider the following system of equations:
\begin{equation}\label{eq:systemv}
\begin{aligned}
&x_1(e^{x_1/2}+1)=0\ ,\\
&x_2(e^{x_2/2}+1)=0\ ,
\end{aligned}
\end{equation}
which has exact solution $(x_1,x_2)=(0,0)$. The two equations are uncoupled and the complex-step Jacobian Newton method is reduced to solving two scalar equations, while the Jacobian-free variant limited by the nonlinear coupling of the operator $\bD_h\bF$ differs from the scalar case.

\begin{figure}[ht!]
  \centering
\includegraphics[width=0.8\columnwidth]{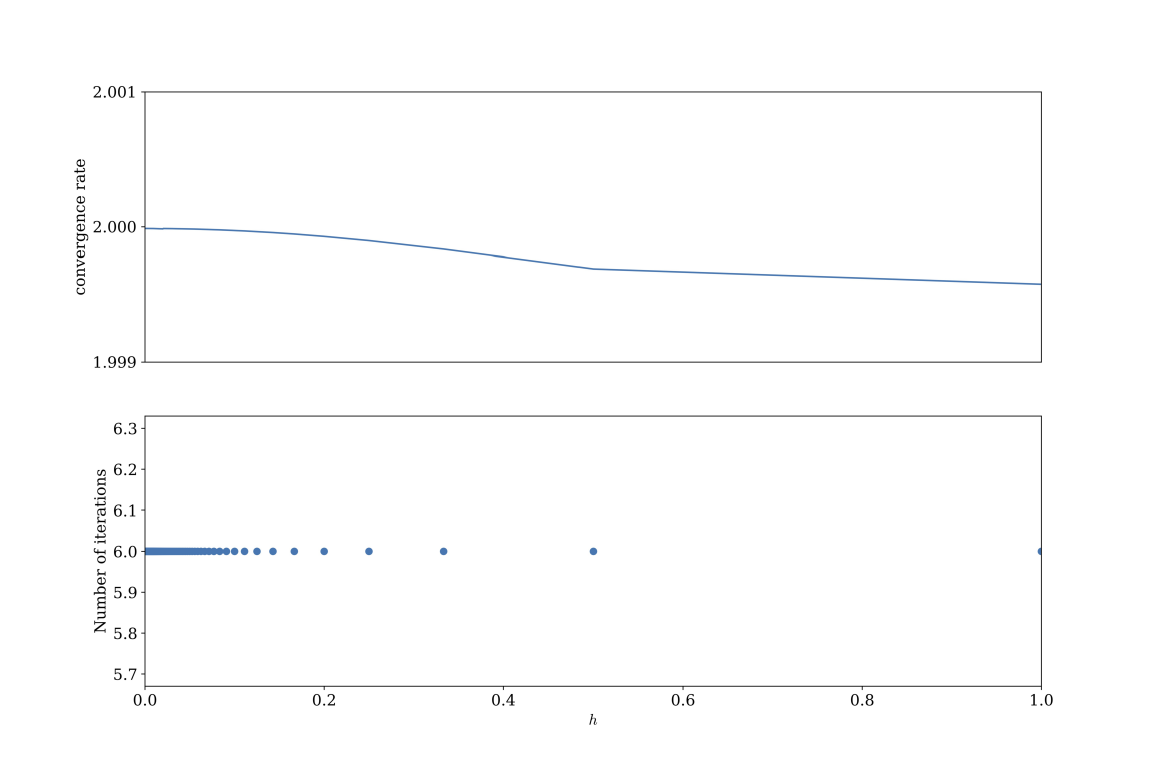}
  \caption{Convergence rate and number of iterations as a function of $h$ of the complex-step Jacobian-free Newton method}
  \label{fig:rates2}
\end{figure}

In this experiment we used initial guess $(x_1^{(0)},x_2^{(0)})=(2.5,2.5)$. For the approximation of the solution of system (\ref{eq:linearsysc}), we employed the accelerated restarted GMRES method \cite{BJM2005} as implemented in the function \texttt{lgmres} of Python  with convergence tolerance $10^{-14}$. The details of the convergence of the complex-step Jacobian-free Newton method are presented in Figure \ref{fig:rates2}. It is noted that the number of iterations required by the GMRES method to converge within the prescribed tolerance varied with the values of $h$ and $k$. Specifically, the number of iterations decreased as $h$ decreased and also as $k$ increased, with the requirement of only a couple of iterations after the first few values of $h$. Figure \ref{fig:rates2} presents the experimental convergence rate $\log[e_{k+1}/e_{k}]/\log[e_{k}/e_{k-1}]$ where $e_k=\|\bx^{(k)}-\bx^\ast\|$ using the last three iterations before the error $e_k$ becomes less than the tolerance $10^{-14}$. The choice of the tolerance was taken so as to achieve clear convergence rates for large values of $k$. In the same figure we present the required number of iterations as function of $h$. We observe that the complex-step Jacobian-free Newton method converges at quadratic rate and within 6 iterations for all values of $h$ we tested in the interval $[10^{-3},1]$. The complex-step Jacobian Newton method has exactly the same convergence properties as in the scalar equation, and the convergence rate along with required iterations are the same as in Figure \ref{fig:rates1}.

\section{Performance in applications}\label{sec:application}

By demonstrating that, in practice, the complex-step Jacobian-free Newton method converges with a quadratic rate, and that the iteration does not necessitate knowledge of any derivatives or Jacobian matrices, we can conclude that it serves as a superior alternative to the classical Secant method. Furthermore, the fact that the evaluation of the complex-step approximation of the Jacobian does not require knowledge of the corresponding matrix places this method within the broader category of Jacobian-free Newton-Krylov methods \cite{KK2004,Kelley1995} and it has been demonstrated by various problems in \cite{KSPC22}. Specifically, considering the approximation of the Jacobian matrix 
\begin{equation}\label{eq:jac2}
\bDF(\bx)\by\approx \frac{1}{h}\Im \bF(\bx+\I h \by)\ ,
\end{equation}
we can compute an approximate solution of the corresponding linear system using Krylov subspaces methods \cite{Saad2003}. In the following experiments we used the accelerated restarted GMRES method of \cite{BJM2005} as it is implemented in the \texttt{lgmres} function of Python for the solution of the linear systems and without assembling the Jacobian matrix but by using the formula (\ref{eq:jac2}). 

\subsection*{Solution of ordinary differential equations}

We start by reporting on the convergence of the complex-step Jacobian-free Newton method applied to the solution of the nonlinear algebraic systems obtained by approximating numerically the solutions of two initial-value problems $y'(t)=f(y(t),t)$. For the time-integration of the ordinary differential equations we consider the fourth-order Gauss-Legendre Runge-Kutta method with two stages given by the Butcher {\em tableau}
\begin{equation}\label{eq:Butcher}
\begin{array}{c|c}
\bc & \bA \\
\hline 
\rule{0pt}{3ex}  & \bb^T 
\end{array}\quad =\quad 
\begin{array}{c|cc}
\frac{1}{2}-\frac{1}{6}\sqrt{3} & \frac{1}{4} & \frac{1}{4}-\frac{1}{6}\sqrt{3} \\
\rule{0pt}{3ex} \frac{1}{2}+\frac{1}{6}\sqrt{3} & \frac{1}{4}+\frac{1}{6}\sqrt{3} & \frac{1}{4} \\ [3pt]
 \hline
\rule{0pt}{3ex}  & \frac{1}{2} & \frac{1}{2}
\end{array}
\end{equation}
Using stepsize $\Delta t$ this particular method requires at every step the solution of the nonlinear system:
\begin{equation}\label{eq:nonlins}
\begin{aligned}
&k_1=f(t_i+c_1h,~y_i+\Delta t (a_{11}k_1+a_{12}k_2))\ ,\\
&k_2=f(t_i+c_2h,~y_i+\Delta t (a_{21}k_1+a_{22}k_2))\ ,
\end{aligned}
\end{equation} 
for the computation of the intermediate stages $k_1$ and $k_2$. After solving the nonlinear system
\begin{equation}\label{eq:nonlins2}
\bF(k_1,k_2)\doteq
\begin{pmatrix}
f(t_i+c_1h,~y_i+\Delta t (a_{11}k_1+a_{12}k_2))-k_1\\
f(t_i+c_2h,~y_i+\Delta t (a_{21}k_1+a_{22}k_2))-k_2
\end{pmatrix}
=\begin{pmatrix}
    0 \\ 0
\end{pmatrix}\ ,
\end{equation} 
for the intermediate stages $k_1$ and $k_2$, we obtain the approximate solution of $y(t^{i+1})$ for the time $t^{i+1}=t^i+\Delta t$ by the formula
$y_{i+1}=y_i+\Delta t (b_1k_1+b_2k_2)$.  For more information related to the particular stiff differential equation and the Runge-Kutta method we refer to the book \cite{HNW1993ii}.

\begin{figure}[ht!]
  \centering
\includegraphics[width=0.8\columnwidth]{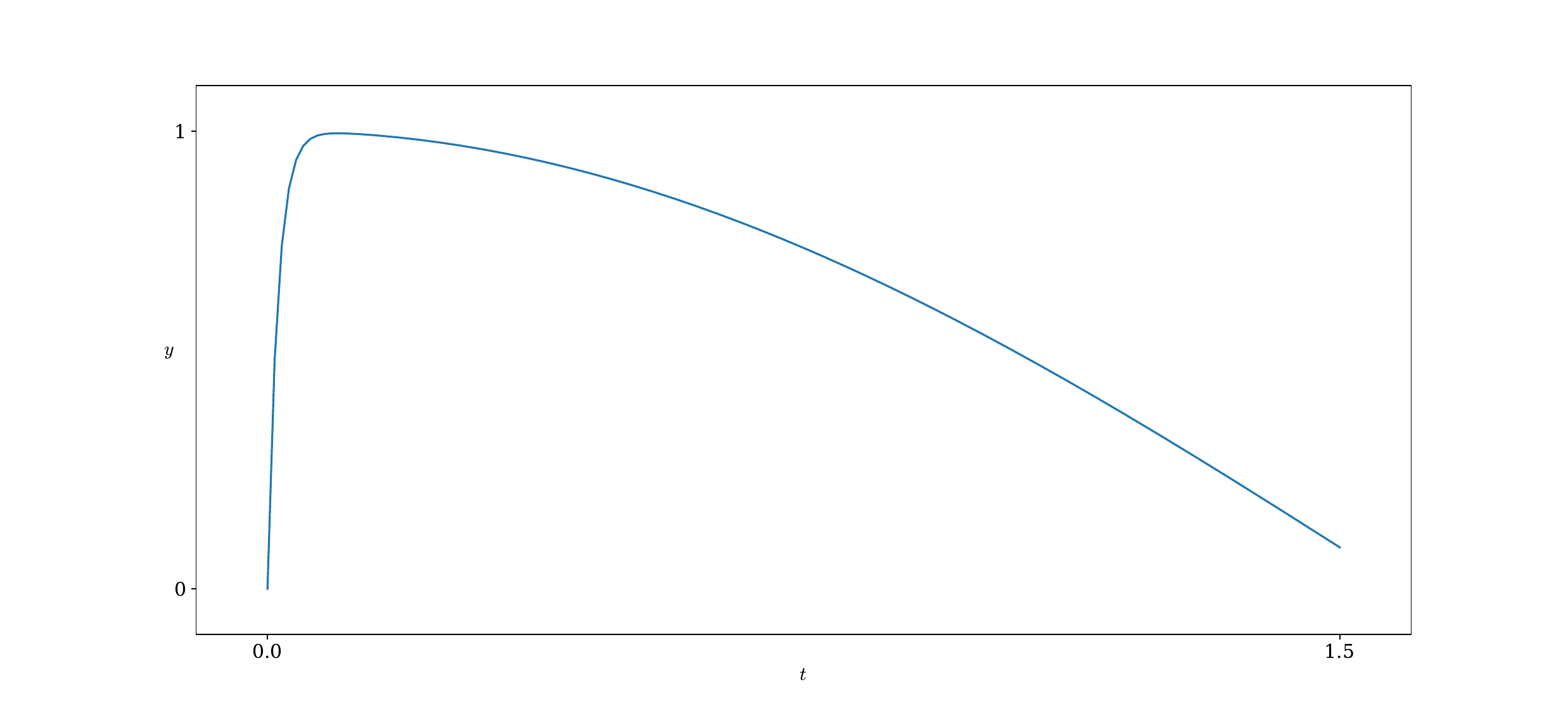}
  \caption{Numerical solution of a stiff ordinary differential equation obtained by solving the nonlinear systems using the complex-step Jacobian-free Newton method}
  \label{fig:ode}
\end{figure}

First we solve the initial value problem $y'(t)=f(y(t),t)$ with $y(0)=0$ and $f(y,t)=-50(y-\cos ~t)$. This is a stiff ordinary differential equation where its numerical approximation requires appropriate numerical methods.

In this experiment we took $\Delta t=10^{-2}$ and for $h=1/n$ for $n=1,2,\dots,10^{6}$ we obtained the solution as depicted in Figure \ref{fig:ode} with extremely fast convergence that required $2$ iterations for all values of $h$ and for all stages of the Runge-Kutta method indicating high-order convergence rate. At every iteration we used the solution of the previous step as an initial condition for the next complex-step Newton iteration. The tolerance we used for this case was $10^{-12}$ for determining the convergence of the Newton method, and $10^{-12}$ for the convergence of the GMRES method. In this experiment, the accelerated restarted GMRES method required at most $2$ iterations to converge for all values of $h$ and for all timesteps.

\begin{figure}[ht!]
  \centering
\includegraphics[width=0.5\columnwidth]{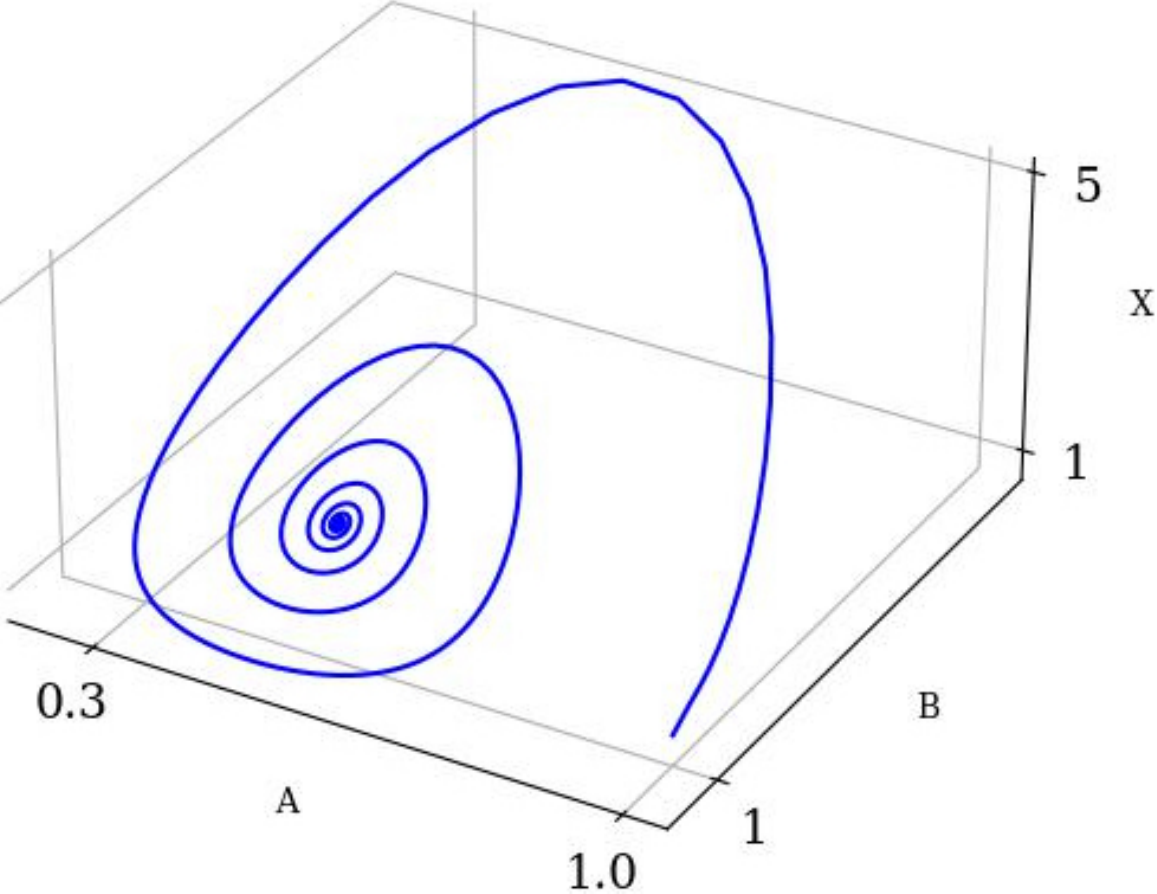}
  \caption{Projection of the spiral orbit of the Olsen system in the space $A-B-X$}
  \label{fig:Olsen}
\end{figure}

We also considered a 4D dynamical system known as Olsen model for biochemical peroxidase-oxidase reaction \cite{Olsen1983}. A characteristic property of this system is that one component of its solution varies much slower than the other three. The Olsen system
\begin{equation}\label{eq:Olsen}
\begin{aligned}
    \frac{dA}{dt} &= \mu-\alpha A-ABY\ ,\\
    \frac{dB}{dt} &= \varepsilon(1-BX-ABY)\ ,\\
    \frac{dX}{dt} &= \lambda(BX-X^2+3ABY-\zeta X+\delta)\ ,\\
    \frac{dY}{dt} &= \kappa\lambda(X^2-Y-ABY)\ , 
\end{aligned}
\end{equation}
where we used the parameters  $\alpha=0.0912$, $\beta=1.2121\times 10^{-5}$, $\varepsilon=0.0037$, $\lambda=18.5281$, $\kappa=3.7963$, $\mu=0.9697$, $\zeta=0.9847$. This in combination with the initial conditions $(A(0),B(0),X(0),Y(0))=(1,1,1,1)$ leads to a spiral solution in the four-dimensional phase-space \cite{MKH2020}. Taking $t\in [0,10]$ and $\Delta t=0.01$ we obtained the solution where its projection in the space $A-B-X$ is depicted in Figure \ref{fig:Olsen}. The particular solution was obtained with $h=0.1$ and tolerance for the complex-step Jacobian-free Newton method $10^{-12}$. As it was expected, the complex-step Jacobian-free Newton method converged within 4 iterations for all values of $h<1$ that we used. This verifies the theory presented in the previous section. In this experiment, we used the accelerated restarted GMRES method and tolerance $10^{-12}$ which required at most 3  iterations for convergence.

\subsection*{Solution of the DNLS}

Here we consider a more complicated example. Specifically, we demonstrate the ability of the complex-step Jacobian-free Newton method to solve complex equations in a case of significant interest by solving the corresponding real and imaginary parts of the equations as real equations. In particular, we consider the computation of ground states of the Discrete Nonlinear Schr\"{o}dinger (DNLS) equation which is a system of complex nonlinear ordinary differential equations with applications in biology, optics, plasma physics and other fields of science. For more information about the particular equation we refer the interested reader to the book \cite{Kev}. 

We consider the DNLS equation 
\begin{equation}
    \label{eq:dnls}
    \I \dot{u}_n+(u_{n+1}-2 u_n+ u_{n-1})+|u_n|^2 u_n =0\ ,
\end{equation}
where $u_n=u_n(t):\mathbb{R}\to\mathbb{C}$ can be thought of as an approximate solution of the Nonlinear Schr\"{o}dinger equation at the lattice sites $n=1,2,\dots, N+1$ with periodic boundary condition $u_1(t)=u_{N+1}(t)$. In the previous notation $\I=\sqrt{-1}$ and $\dot{u}_n=\tfrac{d}{dt}u_n(t)$. Solutions of the DNLS equation conserve the Hamiltonian 
\begin{equation}
    H(t)=-\sum_{n=1}^N [|u_n-u_{n-1}|^2-\frac{1}{2}|u_n|^4]\ ,
\end{equation}
and the norm
\begin{equation}
    P(t)=\sum_{n=1}^N|u_n|^2\ .
\end{equation}

First we search for {\em steady state} solutions of the form $u_n=e^{\I\omega t}v_n$, where $\omega\in\mathbb{R}$ and $v_n\in\mathbb{C}$ independent of $t$. Substitution of this {\em ansatz} into the DNLS equation (\ref{eq:dnls}) yields a system of complex equations 
\begin{equation}
    \label{eq:steadeq}
    -\omega v_n+(v_{n+1}-2 v_n+ v_{n-1})+|v_n|^2 v_n = 0,\quad n=1,2,\dots, N\ .
\end{equation}
Denoting $v_n=x_n+\I y_n$, we write the system (\ref{eq:steadeq}) in the form $\bF(\bx)=\bo$ where $$\bx=(x_1,\dots,x_N,y_1,\dots, y_N)^T$$ and $\bF:\mathbb{R}^{2N}\to \mathbb{R}^{2N}$ with $\bF(\bx)=(\bX(\bx),\bY(\bx))^T$ where $\bX,\bY:\mathbb{R}^N\to \mathbb{R}^N$ are real vectors with entries 
$$ X_j = -\omega x_j+(x_{j+1}-2 x_j+ x_{j-1})+(x_j^2+y_j^2) x_j\ , $$
and 
$$Y_j = -\omega y_j+(y_{j+1}-2 y_j+ y_{j-1})+(x_j^2+y_j^2) y_j\ ,$$ for $j=1,2,\dots, N$. Note that in the previous expressions periodic boundary conditions must be employed for $j=1$ and $j=N$ so as the equations to make sense.

Setting the tolerance for convergence to be $\|\bx^{(k+1)}-\bx^{(k)}\|<10^{-12}$, $h$ as before, and taking a lattice with $N=200$ points, we obtained the numerical ground state solution for $\omega=0.1$ within 8 iterations independent of the choice of $h\leq 0.1$. Specifically, we considered values of $h=1/k$ for $k=10,11,\dots,10^3$. The accelerated restarted GMRES method required initially 9 iterations, which decreased gradually and stabilized at 4 iterations after the value $h=1/55$. As initial guess of the solution we used the soliton-like profile
$$v_n^{(0)}=\frac{1+\I}{2}{\sech}^2(n-n_0)\ ,$$
where $n_0=100$. The computed solution is presented in Figure \ref{fig:solution}(a).

\begin{figure}[ht!]
  \centering
\includegraphics[width=\columnwidth]{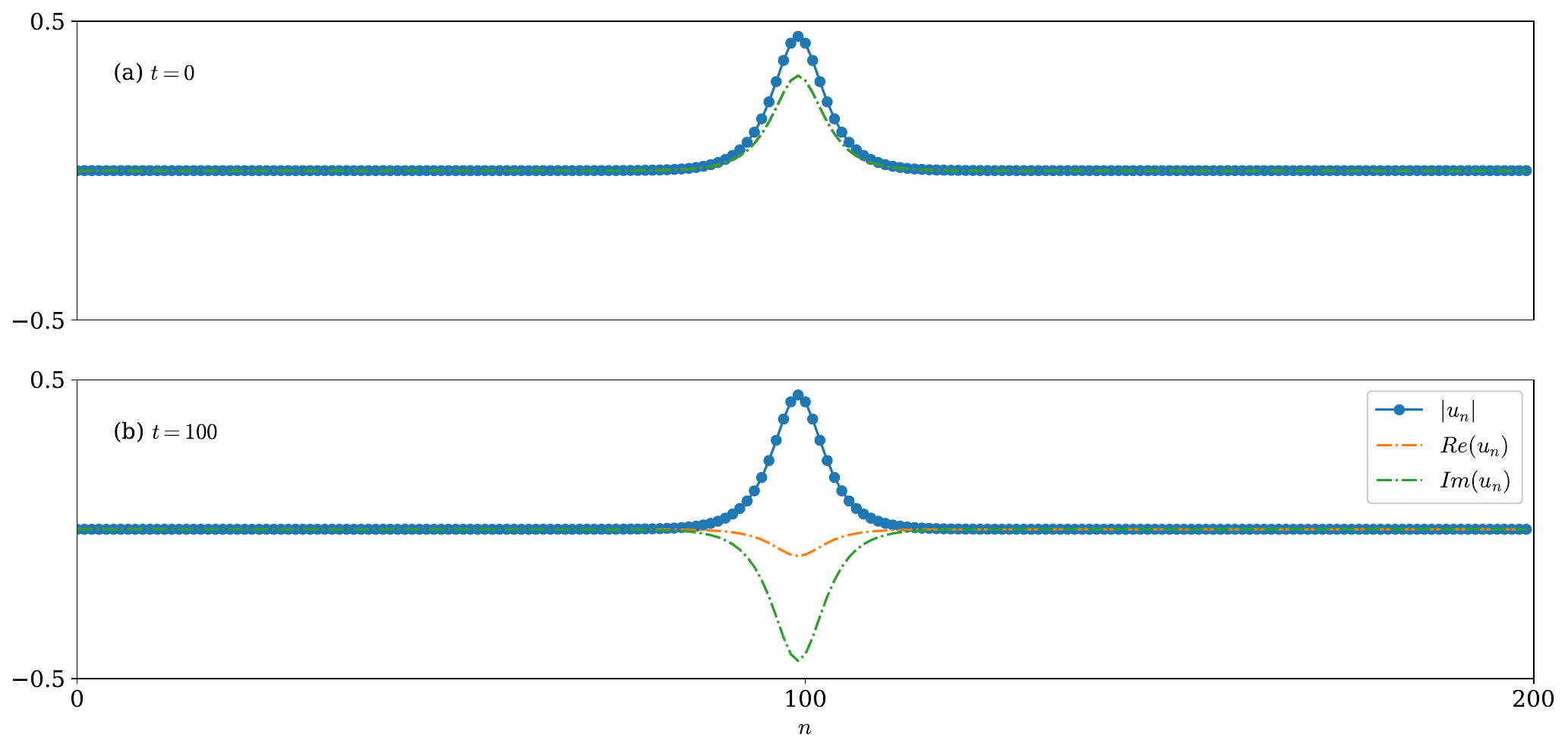}
  \caption{The numerical solution at $t=0$ and $t=100$}
  \label{fig:solution}
\end{figure}
\begin{figure}[ht!]
  \centering
\includegraphics[width=\columnwidth]{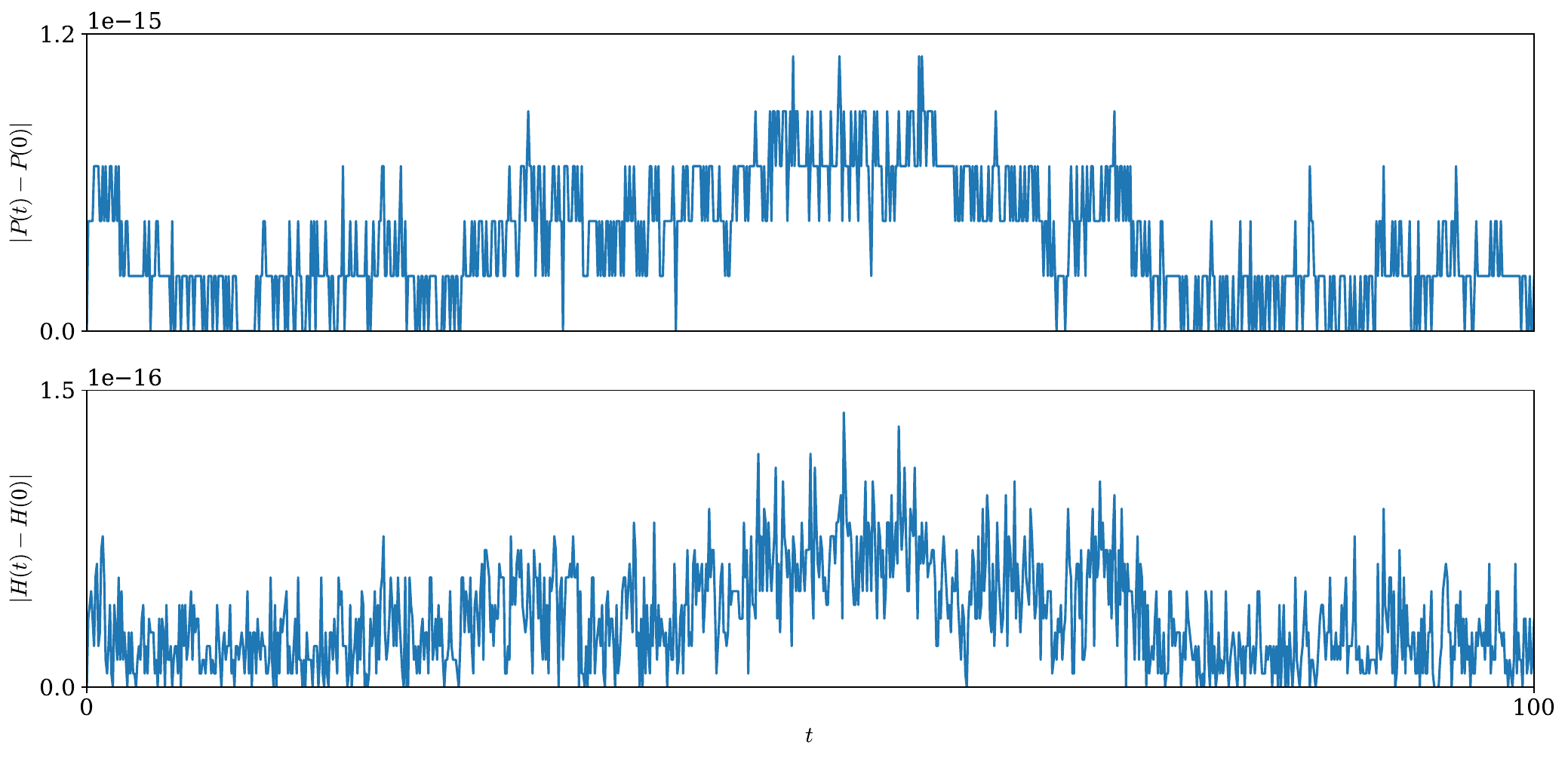}
  \caption{The error in the computed norm and Hamiltonian}
  \label{fig:errors}
\end{figure}

To further assess the convergence of the complex-step Jacobian-free Newton iteration, we approximated numerically the solution of the DNLS equation with initial condition the numerically obtained ground state. For this reason we used again the fourth-order Gauss-Legendre Runge-Kutta method given by the {\em tableau} (\ref{eq:Butcher}). This method has been studied for the numerical solution of the nonlinear Schr\"{o}dinger equation in \cite{ADK}. To apply the complex-step Jacobian-free Newton method, we rewrite the complex system of differential equations into a system of real differential equations. In particular, if we denote $R_n=\Re(u_n)$ and $I_n=\Im(u_n)$ we write system (\ref{eq:dnls}) in the form
\begin{equation}\label{eq:nonlins3}
\begin{aligned}
    \frac{d}{dt}R_n&=-[I_{n+1}-2I_n+I_{n-1}+(R_n^2+I_n^2) I_n]\ ,\\
    \frac{d}{dt}I_n&= R_{n+1}-2R_n+R_{n-1}+(R_n^2+I_n^2) R_n\ ,
\end{aligned}
\end{equation}
and then we apply the complex-step Jacobian-free Newton method to the real nonlinear system (\ref{eq:nonlins3}). For the discretization of (\ref{eq:nonlins3}), we used a final time of $T=100$ and a time step of $\Delta t=0.1$. We tested the behaviour of the complex-step Jacobian-free Newton method for various values of the complex-step $h\leq 1$, and we found that the method converged for all values we tried. Specifically, the complex-step Jacobian-free Newton method converged within 4 iterations for all intermediate stages and for all values of $h$ we used, while we used the tolerance $10^{-12}$ for the complex-step Newton method and $10^{-6}$ for the accelerated restarted GMRES method. The GMRES method converged required maximum 4 iterations for each Newton iteration and the complex-step Jacobian-free Newton method required only $3$ iterations for all time steps. For some values $h>0.1$ that we used, we observed that the method didn't converge with the Python implementation of GMRES method and the tolerances we set but it converged with very similar results with the classical GMRES method of Matlab$^\text{\textregistered}$ for all values $h\leq 1$. The solution at $t=100$ is presented in Figure \ref{fig:solution}(b). The norm was preserved at the value $P=1.25217740220729$, while the Hamiltonian remained at $H=0.041394478367519$. These values indicate an error in norm of the order $10^{-15}$ and an error of $10^{-16}$ in the Hamiltonian. The error in their computation is depicted in Figure \ref{fig:errors}. When we attempted initial conditions that did not correspond to traveling or standing waves, the solution preserved the norm within machine precision, but the Hamiltonian was conserved to an order of $10^{-11}$. This aligns with the nearly preserved Hamiltonian of symplectic methods \cite{HLW2006}. This is an indication that although the numerical method required approximation of solutions through iterative methods (GMRES and complex-step Newton methods), the results remained very accurate. For other applications and examples we refer to \cite{KSPC22}.

\section{Conclusions}

In this work, we considered the complex-step Jacobian-free Newton method. After reviewing the derivation of the complex-step Newton iteration for both scalar and system equations, we proved its convergence for both the Jacobian and Jacobian-free variants. Specifically, we established that the convergence in the Jacobian case is linear, but it becomes quadratic as the complex-step parameter $h$ tends to zero. In contrast, the Jacobian-free variant converges quadratically for any appropriately small value of $h>0$. We concluded this work with experimental studies on the convergence of the complex-step iteration. Additionally, we tested the method on the numerical solution of stiff ordinary differential equations using a symplectic Runge-Kutta method, and we examined the discretization of the Nonlinear Schr\"{o}dinger equation with the same symplectic method. In both cases, the complex-step Jacobian-free Newton iteration performed seamlessly.


\end{document}